\definecolor{gray}{gray}{0.7}
\definecolor{Gray}{gray}{0.3}
\numberwithin{equation}{section}
\newtheorem{theorem}{Theorem}[section]
\newtheorem{lemma}[theorem]{Lemma} 
\newtheorem{corollary}[theorem]{Corollary}
\newtheorem{proposition}[theorem]{Proposition} 
\newtheorem{conjecture}[theorem]{Conjecture} 
\newtheorem{remark}[theorem]{Remark}
\newtheorem{example}[theorem]{Example}
\def\C{\mathbb C}
\def\Q{\mathbb Q}
\def\Z{\mathbb Z}
\DeclareMathOperator{\Hess}{Hess}
\DeclareMathOperator{\Ind}{Ind}
\newcommand{\Flags}{Flag}
\newcommand{\dimX}{m}
\newcommand{\into}{\hookrightarrow}
\newcommand{\onto}{\twoheadrightarrow}
\def\Sn{\mathfrak{S}_n}
\def\S{\mathfrak{S}}
\def\Tn{T}
\def\Xh{X(h)}
\def\Xhh{X(h')}
\def\Ph{P(X(h),q)}
\def\Phh{P(X(h'),q)}
\def\A{A}
\def\q{q}
\def\x{x}
\def\y{y}
\def\cx{\check{x}}
\def\cy{\check{y}}
\def\cf{\check{f}}
\begin{document}
  
\title[Cohomology rings of Hessenberg varieties]{The cohomology rings of regular semisimple Hessenberg varieties for $h=(h(1),n,\ldots,n)$}
\author {Hiraku Abe}
\address{Osaka City University Advanced Mathematical Institute, 3-3-138 Sugimoto, Sumiyoshi-ku, Osaka 558-8585, 
Japan}
\email{hirakuabe@globe.ocn.ne.jp}

\author {Tatsuya Horiguchi}
\address{Department of Pure and Applied Mathematics,
Graduate School of Information Science and Technology,
Osaka University,
Suita, Osaka, 565-0871, Japan}
\email{tatsuya.horiguchi0103@gmail.com}

\author {Mikiya Masuda}
\address{Department of Mathematics, Osaka City University, 3-3-138 Sugimoto, Sumiyoshi-ku, Osaka 558-8585, Japan}
\email{masuda@sci.osaka-cu.ac.jp}
\date{\today}



\begin{abstract}
We investigate the cohomology rings of regular semisimple Hessenberg varieties whose Hessenberg functions are of the form $h=(h(1),n\dots,n)$ in Lie type $A_{n-1}$. The main result of this paper gives an explicit presentation of the cohomology rings in terms of generators and their relations.
Our presentation naturally specializes to Borel's presentation of the cohomology ring of the flag variety and it is compatible with the representation of the symmetric group $\mathfrak{S}_n$ on the cohomology constructed by J. Tymoczko.
As a corollary, we also give an explicit presentation of the $\mathfrak{S}_n$-invariant subring of the cohomology ring.
\end{abstract}

\maketitle

\setcounter{tocdepth}{1}

\section{Introduction}
Hessenberg varieties form an interesting class of algebraic subsets of the full flag variety, and they have been studied in the contexts of geometry (see for example \cite{ma-pr-sh, br-ca04, ty, InskoYong, precup13a, ab-de-ga-ha}),  combinatorics (see e.g. \cite{so-ty, tymo08, an-ty, ha-ty, dr2015, Horiguchi}), and representation theory (see e.g. \cite{Springer76, Tanisaki, proc90, GarsiaProcesi, mb-ty13, AbeHoriguchi}). 
Let $A:\C^n\rightarrow \C^n$ be a linear operator and $h:\{1,2,\dots,n\}\rightarrow\{1,2,\dots,n\}$ a Hessenberg function, i.e. a non-decreasing function satisfying $h(i)\geq i \ (1\leq i\leq n)$. The \textbf{Hessenberg variety} (in Lie type $A_{n-1}$) associated to $A$ and $h$ is defined as
\[
\Hess(A,h) :=\{V_\bullet \in \Flags(\C^n) \mid AV_i \subset V_{h(i)} \ \mbox{for all} \ 1\leq i\leq n \}
\]
where $V_\bullet=( V_1 \subset V_2 \subset \cdots \subset V_n=\C^n)$ is a sequence of linear subspaces of $\C^n$ with $\dim_{\C}V_i=i \ (1\leq i\leq n)$. 
A celebrated example is the Springer fiber, i.e. taking $A$ to be a nilpotent operator and the Hessenberg function $h$ to be the identity function, and it is well-known that the Springer fiber plays a fundamental role in geometric representation theory of the symmetric group $\Sn$ (\cite{Springer76}).
Geometry and topology of $\Hess(A,h)$ reflects properties of the linear operator $A$, and it appears in several areas of mathematics (e.g. representation theory, graph theory, and theory of hyperplane arrangements) depending on choices of appropriate operators.

When we take a regular semisimple operator $S$ (i.e. a diagonalizable matrix with distinct eigenvalues), the associated Hessenberg variety $\Hess(S,h)$ is called a \textbf{regular semisimple Hessenberg variety}, and it is a smooth complex submanifold of the flag variety for all $h$ (\cite{ma-pr-sh}).
Among them, when we take the Hessenberg function $h$ to be the one satisfying $h(i)=n\ (1\leq i\leq n)$, it is the ambient flag variety itself, and when we take $h(i)=i+1\ (1\leq i<n)$, it is the toric variery associated with the fan consisting of the collection of Weyl chambers of the root system of Lie type $A_{n-1}$ (\cite{ma-pr-sh}). 
Since the latter is essentially the minimum case, it means that the regular semisimple Hessenberg varieties $\Hess(S,h)$ provide us a (discrete) family of complex submanifolds of the flag variety, connecting the flag variety itself and the toric variety described above.
The cohomology rings of the two extremal cases (i.e. the flag variety and the toric variety) are well understood and presented explicitly, but for the intermediate cases the ring structure of the cohomology has not been studied well. In particular, explicit presentations of the cohomology rings of intermediate cases are not known. So it is interesting both from geometric and algebraic point of view to determine the cohomology rings of regular semisimple Hessenberg varieties.
The cohomology rings of some other kinds of Hessenberg varieties have been studied in \cite{Tanisaki, fu-ha-ma, ha-ho-ma, Abe-Crooks, ab-ha-ho-ma, AHMMS} for example, and 
in these cases the generators of the cohomology rings come from the cohomology of the flag variety. However, in the case of regular semisimple Hessenberg varieties, the restriction map from the cohomology of the flag variety is \textit{not} surjective in general, and there is a difficulty of finding those cohomology classes which do not come from the flag variety.
On the other hand, regular semisimple Hessenberg varieties have natural torus actions as in the case of the flag variety, and their equivariant cohomology rings admit a presentation called \textbf{GKM presentation} developped in \cite{Go-Ko-Ma} so that we can study the ordinary cohomology rings  from the torus equivariant cohomology rings (\cite{tymo08}).

Another aspect of regular semisimple Hessenberg varieties is the representation of the symmetric group $\Sn$ on their cohomology $H^*(\Hess(S,h);\C)$ constructed by J. Tymoczko (\cite{tymo08}). 
Shareshian and Wahcs observed its dependence on the Hessenberg function $h$, and they announced a beautiful conjecture which states that the $\Sn$-representation on $H^*(\Hess(S,h);\C)$ is described by the chromatic quasisymmetric function of a graph associated to the Hessenberg function $h$ (\cite{sh-wa11, sh-wa14}). This conjecture has been proved by an algebro-geometric method given by Brosnan-Chow (\cite{br-ch}), and soon later Guay-Paquet gave a combinatorial proof (\cite{gu}). Since Shareshian and Wachs had already described the Schur basis expansion of the chromatic quasisymmetric function in \cite{sh-wa14}, this means that the irreducible decomposition of the $\Sn$-representation on the cohomology is determined for arbitrary $h$.
However, the Shareshian-Wachs conjecture was announced as a major step toward to the Stanley-Stembridge conjecture in graph theory which states that the chromatic quasisymmetric function of the incomparability graph of a $(3+1)$-free poset is e-positive (\cite[Conjecture 5.5]{st-ste}, \cite[Conjecture 5.1]{Stanley1995}).
In fact, to solve the Stanley-Stembridge conjecture, Shareshian and Wahcs also conjectured that the $\Sn$-representation on the cohomology $H^*(\Hess(S,h);\C)$ is a \textit{permutation representation} (\cite[Conjecture 5.4]{sh-wa11}, \cite[Conjecture 10.4]{sh-wa14}). It is known that the Stanley-Stembridge conjecture is true if so is the latter conjecture given by Shareshian-Wahcs (See \cite{sh-wa14} for details).

The $\Sn$-representation on the cohomology $H^*(\Hess(S,h);\C)$ of the regular semisimple Hessenberg variety also plays a role of a connection of $\Hess(S,h)$ to other \textit{regular Hessenberg varieties} (i.e. the ones defined for regular matrices). 
As an evidence, the $\Sn$-invariant subring of $H^*(\Hess(S,h);\C)$ in fact gives us the cohomology ring of the \textit{regular nilpotent Hessenberg variety} with the same function $h$ (\cite{AHHM}).
Also, if $\lambda$ is a partition of $n$, then there is an associated Young subgroup $\mathfrak{S}_{\lambda}\subset\Sn$, and the dimension of the $\mathfrak{S}_{\lambda}$-invariant subgroup of $H^i(\Hess(S,h);\C)$ coincides with the dimension of the degree $i$ component of the cohomology of regular Hessenberg variety of type $\lambda$ with the same function $h$, for all integers $i$ (\cite{br-ch}).

In this paper, we determine the ring structure of the cohomology $H^*(\Hess(S,h);\Z)$ of regular semisimple Hessenberg varieties in $\Z$ coefficients whose Hessenberg functions are of the form $h=(h(1),n,\dots,n)$, i.e. $1\leq h(1)\leq n$ and $h(j)=n \ (2\leq j\leq n)$. In particular, we give a finite list of ring generators which is compatible with Tymoczko's $\Sn$-action, and we determine their relations (Theorem \ref{theorem:maincohomology}). There are three advantages of our presentation; (1) it extends Borel's presentation of the cohomology ring of the flag variety, (2) it manifestly exhibits that the $\Sn$-representation in our case is a permutation representation, (3) the $\Sn$-invariant subring $H^*(\Hess(S,h))^{\Sn}$ is presented as well, and in $\Q$ coefficients it is naturally identified with the presentation of the cohomology ring of the corresponding regular nilpotent Hessenberg variety given in \cite{AHHM}.
We note that, in our case $h=(h(1),n,\dots,n)$, it is known that  the $\Sn$-representation on $H^*(\Hess(S,h);\C)$ is a permutation representation (See Section \ref{subsec:Sn-rep} for details), 
but our result shows that it is visible at the level of its ring structure.

This paper is organized as follows. In Section 2, we briefly recall basic properties of regular semisimple Hessenberg varieties and Tymoczko's $\Sn$-action on their cohomology rings. 
After that, we concentrate on the case $h=(h(1),n,\dots,n)$, and  we describe an arithmetic formula for their Poincar$\acute{\text{e}}$ polynomials in Section 3. 
Finally, we describe the ring structure of cohomology of the regular semisimple Hessenberg varieties in Section 4, and we see that the $\Sn$-action is naturally described in our presentation.

\bigskip
\noindent \textbf{Acknowledgements.}  
The first author is partially supported by a JSPS Grant-in-Aid for Young Scientists (B): 15K17544 and a JSPS Research Fellowship for Young Scientists Postdoctoral Fellow: 16J04761.
The second author was partially supported by JSPS Grantin-Aid
for JSPS Fellows 15J09343. 
The third author is partially supported by JSPS
Grant-in-Aid for Scientific Research (C) 16K05152.

\section{Background and notations}
\label{sec:background}

In this section, we recall some background, and establish some notations for the rest of the paper. 
We work with cohomology with coefficients in $\Z$ throughout this paper unless otherwise specified.

\subsection{Regular semisimple Hessenberg varieties}
\label{subsec:background}
Let $n$ be a positive integer, and we use the notation $[n]:=\{1,2,\ldots,n\}$ throughout this paper.
A {\bf Hessenberg function} is a nondecreasing function $h: [n] \to [n]$ satisfying $h(i) \geq i$ for all $i \in [n]$. 
We frequently write a Hessenberg function by listing its values in a sequence, i.e. $h=(h(1),h(2),\ldots,h(n))$.
We may identify a Hessenberg function $h$ with a configuration of the shaded boxes on a square grid of size $n \times n$ which consists of boxes in the $i$-th row and the $j$-th column satisfying $i \leq h(j)$ for $i,j\in[n]$, as we illustrate in the following example.

\begin{example}\label{example:HessenbergFunction}
Let $n=5$. The Hessenberg function $h=(3,3,4,5,5)$ corresponds to the configuration of the shaded boxes drawn in Figure $\ref{pic:stair-shape}$. \\
\begin{figure}[h]
\begin{center}
\begin{picture}(75,75)
\put(0,63){\colorbox{gray}}
\put(0,67){\colorbox{gray}}
\put(0,72){\colorbox{gray}}
\put(4,63){\colorbox{gray}}
\put(4,67){\colorbox{gray}}
\put(4,72){\colorbox{gray}}
\put(9,63){\colorbox{gray}}
\put(9,67){\colorbox{gray}}
\put(9,72){\colorbox{gray}}

\put(15,63){\colorbox{gray}}
\put(15,67){\colorbox{gray}}
\put(15,72){\colorbox{gray}}
\put(19,63){\colorbox{gray}}
\put(19,67){\colorbox{gray}}
\put(19,72){\colorbox{gray}}
\put(24,63){\colorbox{gray}}
\put(24,67){\colorbox{gray}}
\put(24,72){\colorbox{gray}}

\put(30,63){\colorbox{gray}}
\put(30,67){\colorbox{gray}}
\put(30,72){\colorbox{gray}}
\put(34,63){\colorbox{gray}}
\put(34,67){\colorbox{gray}}
\put(34,72){\colorbox{gray}}
\put(39,63){\colorbox{gray}}
\put(39,67){\colorbox{gray}}
\put(39,72){\colorbox{gray}}

\put(45,63){\colorbox{gray}}
\put(45,67){\colorbox{gray}}
\put(45,72){\colorbox{gray}}
\put(49,63){\colorbox{gray}}
\put(49,67){\colorbox{gray}}
\put(49,72){\colorbox{gray}}
\put(54,63){\colorbox{gray}}
\put(54,67){\colorbox{gray}}
\put(54,72){\colorbox{gray}}

\put(60,63){\colorbox{gray}}
\put(60,67){\colorbox{gray}}
\put(60,72){\colorbox{gray}}
\put(64,63){\colorbox{gray}}
\put(64,67){\colorbox{gray}}
\put(64,72){\colorbox{gray}}
\put(69,63){\colorbox{gray}}
\put(69,67){\colorbox{gray}}
\put(69,72){\colorbox{gray}}

\put(0,48){\colorbox{gray}}
\put(0,52){\colorbox{gray}}
\put(0,57){\colorbox{gray}}
\put(4,48){\colorbox{gray}}
\put(4,52){\colorbox{gray}}
\put(4,57){\colorbox{gray}}
\put(9,48){\colorbox{gray}}
\put(9,52){\colorbox{gray}}
\put(9,57){\colorbox{gray}}

\put(15,48){\colorbox{gray}}
\put(15,52){\colorbox{gray}}
\put(15,57){\colorbox{gray}}
\put(19,48){\colorbox{gray}}
\put(19,52){\colorbox{gray}}
\put(19,57){\colorbox{gray}}
\put(24,48){\colorbox{gray}}
\put(24,52){\colorbox{gray}}
\put(24,57){\colorbox{gray}}

\put(30,48){\colorbox{gray}}
\put(30,52){\colorbox{gray}}
\put(30,57){\colorbox{gray}}
\put(34,48){\colorbox{gray}}
\put(34,52){\colorbox{gray}}
\put(34,57){\colorbox{gray}}
\put(39,48){\colorbox{gray}}
\put(39,52){\colorbox{gray}}
\put(39,57){\colorbox{gray}}

\put(45,48){\colorbox{gray}}
\put(45,52){\colorbox{gray}}
\put(45,57){\colorbox{gray}}
\put(49,48){\colorbox{gray}}
\put(49,52){\colorbox{gray}}
\put(49,57){\colorbox{gray}}
\put(54,48){\colorbox{gray}}
\put(54,52){\colorbox{gray}}
\put(54,57){\colorbox{gray}}

\put(60,48){\colorbox{gray}}
\put(60,52){\colorbox{gray}}
\put(60,57){\colorbox{gray}}
\put(64,48){\colorbox{gray}}
\put(64,52){\colorbox{gray}}
\put(64,57){\colorbox{gray}}
\put(69,48){\colorbox{gray}}
\put(69,52){\colorbox{gray}}
\put(69,57){\colorbox{gray}}

\put(0,33){\colorbox{gray}}
\put(0,37){\colorbox{gray}}
\put(0,42){\colorbox{gray}}
\put(4,33){\colorbox{gray}}
\put(4,37){\colorbox{gray}}
\put(4,42){\colorbox{gray}}
\put(9,33){\colorbox{gray}}
\put(9,37){\colorbox{gray}}
\put(9,42){\colorbox{gray}}

\put(15,33){\colorbox{gray}}
\put(15,37){\colorbox{gray}}
\put(15,42){\colorbox{gray}}
\put(19,33){\colorbox{gray}}
\put(19,37){\colorbox{gray}}
\put(19,42){\colorbox{gray}}
\put(24,33){\colorbox{gray}}
\put(24,37){\colorbox{gray}}
\put(24,42){\colorbox{gray}}

\put(30,33){\colorbox{gray}}
\put(30,37){\colorbox{gray}}
\put(30,42){\colorbox{gray}}
\put(34,33){\colorbox{gray}}
\put(34,37){\colorbox{gray}}
\put(34,42){\colorbox{gray}}
\put(39,33){\colorbox{gray}}
\put(39,37){\colorbox{gray}}
\put(39,42){\colorbox{gray}}

\put(45,33){\colorbox{gray}}
\put(45,37){\colorbox{gray}}
\put(45,42){\colorbox{gray}}
\put(49,33){\colorbox{gray}}
\put(49,37){\colorbox{gray}}
\put(49,42){\colorbox{gray}}
\put(54,33){\colorbox{gray}}
\put(54,37){\colorbox{gray}}
\put(54,42){\colorbox{gray}}

\put(60,33){\colorbox{gray}}
\put(60,37){\colorbox{gray}}
\put(60,42){\colorbox{gray}}
\put(64,33){\colorbox{gray}}
\put(64,37){\colorbox{gray}}
\put(64,42){\colorbox{gray}}
\put(69,33){\colorbox{gray}}
\put(69,37){\colorbox{gray}}
\put(69,42){\colorbox{gray}}

%
%
\put(30,18){\colorbox{gray}}
\put(30,22){\colorbox{gray}}
\put(30,27){\colorbox{gray}}
\put(34,18){\colorbox{gray}}
\put(34,22){\colorbox{gray}}
\put(34,27){\colorbox{gray}}
\put(39,18){\colorbox{gray}}
\put(39,22){\colorbox{gray}}
\put(39,27){\colorbox{gray}}

\put(45,18){\colorbox{gray}}
\put(45,22){\colorbox{gray}}
\put(45,27){\colorbox{gray}}
\put(49,18){\colorbox{gray}}
\put(49,22){\colorbox{gray}}
\put(49,27){\colorbox{gray}}
\put(54,18){\colorbox{gray}}
\put(54,22){\colorbox{gray}}
\put(54,27){\colorbox{gray}}

\put(60,18){\colorbox{gray}}
\put(60,22){\colorbox{gray}}
\put(60,27){\colorbox{gray}}
\put(64,18){\colorbox{gray}}
\put(64,22){\colorbox{gray}}
\put(64,27){\colorbox{gray}}
\put(69,18){\colorbox{gray}}
\put(69,22){\colorbox{gray}}
\put(69,27){\colorbox{gray}}

%
%
%
\put(45,3){\colorbox{gray}}
\put(45,7){\colorbox{gray}}
\put(45,12){\colorbox{gray}}
\put(49,3){\colorbox{gray}}
\put(49,7){\colorbox{gray}}
\put(49,12){\colorbox{gray}}
\put(54,3){\colorbox{gray}}
\put(54,7){\colorbox{gray}}
\put(54,12){\colorbox{gray}}

\put(60,3){\colorbox{gray}}
\put(60,7){\colorbox{gray}}
\put(60,12){\colorbox{gray}}
\put(64,3){\colorbox{gray}}
\put(64,7){\colorbox{gray}}
\put(64,12){\colorbox{gray}}
\put(69,3){\colorbox{gray}}
\put(69,7){\colorbox{gray}}
\put(69,12){\colorbox{gray}}

\put(0,0){\framebox(15,15)}
\put(15,0){\framebox(15,15)}
\put(30,0){\framebox(15,15)}
\put(45,0){\framebox(15,15)}
\put(60,0){\framebox(15,15)}
\put(0,15){\framebox(15,15)}
\put(15,15){\framebox(15,15)}
\put(30,15){\framebox(15,15)}
\put(45,15){\framebox(15,15)}
\put(60,15){\framebox(15,15)}
\put(0,30){\framebox(15,15)}
\put(15,30){\framebox(15,15)}
\put(30,30){\framebox(15,15)}
\put(45,30){\framebox(15,15)}
\put(60,30){\framebox(15,15)}
\put(0,45){\framebox(15,15)}
\put(15,45){\framebox(15,15)}
\put(30,45){\framebox(15,15)}
\put(45,45){\framebox(15,15)}
\put(60,45){\framebox(15,15)}
\put(0,60){\framebox(15,15)}
\put(15,60){\framebox(15,15)}
\put(30,60){\framebox(15,15)}
\put(45,60){\framebox(15,15)}
\put(60,60){\framebox(15,15)}
\end{picture}
\end{center}
\caption{The configuration corresponding to $h=(3,3,4,5,5)$.}
\label{pic:stair-shape}
\end{figure}
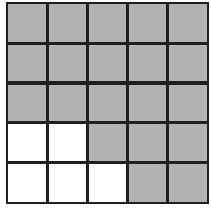
\end{example}

The flag variety $\Flags(\C^n)$ consists of nested sequences of linear subspaces of $\C^n$
$$
V_\bullet:=( V_1 \subset V_2 \subset \cdots \subset V_n=\C^n)
$$
where $V_i$ is of dimension $i$.
For a linear operator $\A: \C^n \to \C^n$ and a Hessenberg function $h: [n] \to [n]$, a {\bf Hessenberg variety} is defined as follows:
$$
\Hess(A,h)=\{V_\bullet \in \Flags(\C^n) \mid AV_i \subset V_{h(i)} \ \mbox{for all} \ i\in[n] \}.
$$
In this paper, we focus on $\Hess(S,h)$ where $S$ is a regular semisimple operator (i.e. a diagonalizable matrix with distinct eigenvalues). The $\Hess(S,h)$ is called a {\bf regular semisimple Hessenberg variety}. In what follows, we assume that $S$ is diagonal with respect to the standard basis of $\C^n$.
Since its topology does not depend on the eigenvalues of $S$ (see specifically Theorem \ref{theorem:Xh} and Proposition~\ref{proposition:GKM} below), we denote $\Hess(S,h)$ simply by $\Xh$. We note that when $h(i)=n \ (1\leq i\leq n)$ we have $\Xh=\Flags(\C^n)$ and when $h(i)=i+1\ (1\leq i<n)$ it is known that $\Xh$ is the toric variety associated with the fan consisting of the collection of Weyl chambers of the root system of type $A_{n-1}$ (\cite{ma-pr-sh}). The following theorem gives some basic properties of $\Xh$.

\begin{theorem} $($\cite[Theorem~6 and Theorem~8]{ma-pr-sh} \label{theorem:Xh}$)$
Let $\Xh$ be a regular semisimple Hessenberg variety. Then the following hold. 
\begin{itemize}
\item[(1)] $\Xh$ is smooth equidimensional of dimension equal to $\displaystyle\sum _{i=1}^n (h(i)-i)$.
\item[(2)] The integral cohomology group of $X$ is torsion-free, and the odd-degree cohomology groups vanish. 
\item[(3)] Let $\Ph$ denote the Poincar\'{e} polynomial of $\Xh$ with $\deg(q)=2$.
Then we have
$$
\Ph=\displaystyle\sum _{w\in \Sn} \q^{d_h(w)}
$$
where $\Sn$ is the $n$-th symmetric group and $d_h(w)$ is the cardinality of the following set:
$$
D_h(w):=\{(j,i) \mid 1\leq j < i \leq n, \ w(j)>w(i), \ i\leq h(j) \}. 
$$ 
\end{itemize}
\end{theorem}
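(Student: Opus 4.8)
The plan is to exploit the action of the diagonal torus $T=(\C^*)^n$ on $\Flags(\C^n)$. Since $S$ is diagonal, this action preserves $\Xh$, and its fixed points are the coordinate flags $p_w$ $(w\in\Sn)$ given by $(p_w)_k=\langle e_{w(1)},\ldots,e_{w(k)}\rangle$; all of these lie in $\Xh$ because $S(p_w)_k\subset (p_w)_k$, and they are isolated. With this in hand I would deduce (1) from a transversality argument and (2)--(3) from the Bialynicki--Birula decomposition, so the whole theorem reduces to a single local computation at the points $p_w$, namely the determination of the $T$-weights on the tangent space $T_{p_w}\Xh$.

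For (1), realize $\Xh$ as the zero scheme of an explicit section. Writing $\mathcal{V}_\bullet$ for the tautological flag of subbundles on $\Flags(\C^n)$ and $\mathcal{L}_i=\mathcal{V}_i/\mathcal{V}_{i-1}$, the operator $S$ induces bundle maps $s_i\colon \mathcal{L}_i\to \underline{\C^n}/\mathcal{V}_{h(i)}$, and because $h$ is nondecreasing the simultaneous vanishing of $s=(s_1,\ldots,s_n)$ is exactly the Hessenberg condition defining $\Xh$. The bundle $\bigoplus_i\Hom(\mathcal{L}_i,\underline{\C^n}/\mathcal{V}_{h(i)})$ has rank $\sum_i(n-h(i))$, so every component of the zero scheme has dimension at least $\binom{n}{2}-\sum_i(n-h(i))=\sum_i(h(i)-i)$. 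The key step is to compute $ds$ at each $p_w$: perturbing $p_w$ in the tangent direction of weight $t_{w(i)}-t_{w(j)}$ (for $j<i$) alters the $j$-th line, and the induced first-order change of $s$ equals $(\mu_{w(i)}-\mu_{w(j)})\,\overline{e_{w(i)}}$ modulo $\mathcal{V}_{h(j)}$, where the $\mu$'s are the eigenvalues of $S$. Hence this change vanishes precisely when $i\le h(j)$, and is nonzero when $i>h(j)$. \emph{This computation is the main obstacle}, and it is exactly where regular semisimplicity enters: the nonvanishing in the directions $i>h(j)$ uses that $S$ has distinct eigenvalues, so that $\mu_{w(i)}\neq\mu_{w(j)}$. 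It follows that $ds$ is surjective at $p_w$ and that $T_{p_w}\Xh=\ker(ds)_{p_w}$ is spanned by the weight vectors for pairs $(j,i)$ with $j<i\le h(j)$, of dimension $\sum_j(h(j)-j)$ at every fixed point. Since under a generic one-parameter subgroup $\lambda$ every point of the projective variety $\Xh$ degenerates to some $p_w$, upper semicontinuity of $x\mapsto\dim_{\C}T_x\Xh$ along orbit closures gives $\dim_{\C}T_x\Xh\le \sum_i(h(i)-i)$ everywhere, which together with the lower bound from the rank of the section shows that $\Xh$ is smooth and equidimensional of dimension $\sum_i(h(i)-i)$, proving (1).

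For (2) and (3), apply the Bialynicki--Birula theorem to the smooth projective variety $\Xh$ with its isolated fixed points. Choosing $\lambda$ generic with $\lambda_1>\cdots>\lambda_n$ produces a paving of $\Xh$ by affine cells $C_w\cong\C^{\,d_h(w)}$, where $d_h(w)$ is the number of attracting tangent weights at $p_w$. Using the description of $T_{p_w}\Xh$ from the previous step, the weight $t_{w(i)}-t_{w(j)}$ (with $j<i\le h(j)$) is attracting exactly when $w(j)>w(i)$, so $C_w$ has complex dimension $|D_h(w)|=d_h(w)$. A paving by affine cells of even real dimension immediately implies that the odd-degree cohomology of $\Xh$ vanishes, that $H^*(\Xh;\Z)$ is free, and that $\Ph=\sum_{w\in\Sn}\q^{\,d_h(w)}$, which is (2) and (3).
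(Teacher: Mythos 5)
The paper does not prove this statement; it is quoted from De Mari--Procesi--Shayman \cite{ma-pr-sh} (Theorems~6 and~8), so there is no internal proof to compare against and I assess your argument on its own terms. Your strategy is sound and in fact close in spirit to the original: smoothness and the dimension formula via a torus-equivariant tangent-space computation at the fixed points $p_w$, and parts (2)--(3) via an affine paving with cells indexed by $\Sn$ of dimensions $d_h(w)$ (in \cite{ma-pr-sh} the paving comes from intersecting with Schubert cells, which for a suitable choice of one-parameter subgroup coincides with your Bialynicki--Birula cells). Your weight computation is correct: the direction of weight $t_{w(i)}-t_{w(j)}$, $j<i$, is tangent to $\Xh$ at $p_w$ iff $i\le h(j)$, with regular semisimplicity entering exactly through $\mu_{w(i)}\neq\mu_{w(j)}$ to rule out tangency when $i>h(j)$; the count of attracting weights among these then gives $d_h(w)$.

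The one step that needs repair is the claim that $S$ induces a \emph{global} bundle map $s_i\colon\mathcal{L}_i\to\underline{\C^n}/\mathcal{V}_{h(i)}$ on all of $\Flags(\C^n)$: the composite $\mathcal{V}_i\to\underline{\C^n}\to\underline{\C^n}/\mathcal{V}_{h(i)}$ descends to $\mathcal{L}_i=\mathcal{V}_i/\mathcal{V}_{i-1}$ only over the locus where $S\mathcal{V}_{i-1}\subset\mathcal{V}_{h(i)}$, which is not all of $\Flags(\C^n)$. This does not damage the argument, because the only thing you extract from the section is the lower bound $\dim\ge\binom{n}{2}-\sum_i(n-h(i))=\sum_i(h(i)-i)$ for every irreducible component, and that is recovered by iterating: set $Z_0=\Flags(\C^n)$ and let $Z_i\subset Z_{i-1}$ be the zero locus of $s_i$, which \emph{is} well defined over $Z_{i-1}$ since $h$ is nondecreasing; each step imposes at most $n-h(i)$ local equations, so Krull's height theorem gives the bound for $Z_n=\Xh$. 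With that patch, your semicontinuity argument (the locus $\{x:\dim T_x\ge m\}$ is closed and torus-stable, hence contains $p_w$ whenever it meets the attracting set of $p_w$) correctly propagates the fixed-point computation to smoothness and equidimensionality everywhere, and the Bialynicki--Birula paving by affine cells of complex dimensions $d_h(w)$ yields (2) and (3).
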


\smallskip

\subsection{Torus equivariant cohomology of $\Xh$}
Let $\Tn$ be the following $n$-dimensional torus in the general linear group ${\rm GL}(n, \C)$:
$$
\Tn:=\left\{ \begin{pmatrix} g_1 &      &          &  \\ 
                                           & g_2 &           &  \\
                                           &      & \ddots  &  \\
                                           &      &            & g_n 
\end{pmatrix} \middle| \ g_i\in \C^* \ {\rm for \ all } \ i=1,2,\ldots,n \right\}.
$$ 
Since the flag variety $\Flags(\C^n)$ has a natural action of ${\rm GL}(n, \C)$, the torus $\Tn$ acts on $\Flags(\C^n)$ by its restriction. This $\Tn$-action preserves a regular semisimple Hessenberg variety $\Xh$ since the elements of $\Tn$ commute with the diagonal operator $S$, and $\Xh$ contains all the $\Tn$-fixed points $\Flags(\C^n)^{\Tn}$ of the flag variety $\Flags(\C^n)$ (\cite[Proposition~3]{ma-pr-sh}).
So we see that 
$$
\Xh^{\Tn}=\Flags(\C^n)^{\Tn}\cong \Sn
$$
where $\Sn$ is the $n$-th symmetric group.
Here, the identification $\Flags(\C^n)^{\Tn}\cong \Sn$ is given by sending $w\in \Sn$ to the flag $V_{\bullet}$ specified by $V_i:={ \rm span }_{\C}\{e_{w(1)},\ldots,e_{w(i)}\}$ where $\{e_{1},\ldots,e_{n}\}$ is the standard basis of $\C^n$.

The $\Tn$-equivariant cohomology $H^*_{\Tn}(\Xh)$ is defined to be the ordinary cohomology $H^*(E{\Tn} \times_{\Tn} \Xh)$ where $E{\Tn} \to B{\Tn}$ is a universal principal bundle of $\Tn$. In particular, we have ring homomorphisms 
$$
H^*(B{\Tn}) \to H^*_{\Tn}(\Xh) \to H^*(\Xh)
$$
since $E{\Tn} \times_{\Tn} \Xh \to B{\Tn}$ is a fiber bundle with fiber $\Xh$.
Let $t_i$ be the first Chern class of the line bundle over $BT$ corresponding to the projection $\Tn \to \C^*$; $diag(g_1,\ldots,g_n) \mapsto g_i$.
Then we may identify $H^*(B{\Tn})$ with the polynomial ring $\Z[t_1,\dots,t_n]$ with $\deg(t_i)=2$ for all $i=1,\ldots,n$. Since $\Xh$ has no odd-degree cohomology as we saw in Theorem~\ref{theorem:Xh} (2), we have the following properties (\cite{mi-to}):
\begin{itemize}
 \item $\Z[t_1,\dots,t_n]=H^*(B{\Tn}) \to H^*_{\Tn}(\Xh)$ is injective,
 \item $H^*_{\Tn}(\Xh) \to H^*(\Xh)$ is surjective,
 \item $H^*(\Xh) \cong H^*_{\Tn}(\Xh)/(t_1,\dots,t_n)$ where $(t_1,\dots,t_n)$ is the ideal of $H^*_{\Tn}(\Xh)$ generated by $t_1,\dots,t_n$,
 \item the restriction map $H^*_{\Tn}(\Xh) \to H^*_{\Tn}(\Xh^{\Tn})=\bigoplus _{w \in \Sn} \Z[t_1,\dots,t_n]$ is injective. 
\end{itemize}
The second item above is particularly important for us; it means that we can study the ordinary cohomology ring $H^*(\Xh)$ from the equivariant cohomology ring $H^*_{\Tn}(\Xh)$, and the last item enables us to identify $H^*_{\Tn}(\Xh)$ with a subring of $\bigoplus _{w \in \Sn} \Z[t_1,\dots,t_n]$ which is described in the following proposition.

\begin{proposition} $($\cite[Proposition~4.7]{tymo08} \label{proposition:GKM}$)$
The image of the restriction map above $H^*_{\Tn}(\Xh) \into \bigoplus _{w \in \Sn} \Z[t_1,\dots,t_n]$ is given by 
\begin{equation}\label{eq:GKM}
\left\{ \alpha \in \bigoplus_{w\in\Sn} \Z[t_1,\dots,t_n] \left|
\begin{matrix} \text{ $\alpha(w)-\alpha(w')$ is divisible by $t_{w(i)}-t_{w(j)}$} \\
\text{ if there exist $1\leq j<i\leq n$ satisfying} \\
 \text{ $w'=w(j \ i)$ and $i \leq h(j)$} \end{matrix} \right.\right\}
\end{equation}
where $\alpha(w)$ is the $w$-component of $\alpha$ and $(j \ i)$ is the transposition of $j$ and $i$. 
\end{proposition}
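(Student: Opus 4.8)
The statement is the standard GKM description (\cite{Go-Ko-Ma}) specialized to $\Xh$, so the plan is to verify the GKM hypotheses for the $\Tn$-action on $\Xh$ and then to compute the associated combinatorial data: the fixed points, the one-dimensional orbits, and their weights. By Theorem~\ref{theorem:Xh}(2) the cohomology of $\Xh$ is torsion-free and vanishes in odd degrees, so $\Xh$ is equivariantly formal and the restriction $H^*_{\Tn}(\Xh)\into H^*_{\Tn}(\Xh^{\Tn})$ is injective, as already recorded in the list preceding the statement. The content of the GKM theorem is that, under these hypotheses together with the condition that $\Tn$ acts with isolated fixed points and only finitely many one-dimensional orbits, the image consists precisely of those tuples $\alpha$ whose components at the two endpoints of each one-dimensional orbit closure differ by a multiple of the weight of that orbit. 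Everything thus reduces to describing the one-dimensional orbits of $\Xh$ and their weights.

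Since $\Xh$ is a closed $\Tn$-invariant subvariety of $\Flags(\C^n)$ containing all $\Tn$-fixed points, its one-dimensional orbits are exactly those one-dimensional orbits of the flag variety that lie in $\Xh$, and these are finite in number. For the full flag variety the one-dimensional orbit closures are the $\Tn$-invariant $\mathbb{P}^1$'s joining a fixed point $w$ to $w(j\ i)$ for a transposition $(j\ i)$ with $j<i$. Concretely, writing $V_\bullet$ for the flag of $w$, this $\mathbb{P}^1$ is obtained by replacing, in the range of indices $j\leq k<i$, the line $\C e_{w(j)}$ by the moving line $\C(a\,e_{w(j)}+b\,e_{w(i)})$, $[a:b]\in\mathbb{P}^1$, while keeping $V_k$ unchanged for $k<j$ and for $k\geq i$; I will write $V_\bullet(t)$, $t=[a:b]$, for the resulting flag. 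A direct computation shows that $\Tn$ acts on this $\mathbb{P}^1$ through the character $t_{w(i)}-t_{w(j)}$ at the fixed point $w$, which is the weight attached to the edge $\{w,w(j\ i)\}$.

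The heart of the argument is to determine which of these $\mathbb{P}^1$'s lie in $\Xh$. Outside the range $j\leq k<i$ the subspace $V_k(t)$ is a coordinate subspace, hence $S$-invariant, so the Hessenberg condition $SV_k(t)\subseteq V_{h(k)}(t)$ is automatic there. Within the range, $SV_k(t)$ contains $V_{j-1}$ together with both $\C(a\,e_{w(j)}+b\,e_{w(i)})$ and $\C(a\,s_{w(j)}\,e_{w(j)}+b\,s_{w(i)}\,e_{w(i)})$, where $s_\ell$ denotes the (distinct) eigenvalues of $S$; because $s_{w(j)}\neq s_{w(i)}$, for $a,b$ both nonzero these two lines span all of $\C e_{w(j)}\oplus\C e_{w(i)}$. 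Consequently $SV_k(t)\subseteq V_{h(k)}(t)$ forces $V_{h(k)}(t)$ to contain both $e_{w(j)}$ and $e_{w(i)}$ individually, which happens if and only if $h(k)\geq i$. Running over $j\leq k<i$ and using that $h$ is nondecreasing, this holds for all such $k$ precisely when $h(j)\geq i$. Hence the $\mathbb{P}^1$ joining $w$ and $w(j\ i)$ lies in $\Xh$ if and only if $i\leq h(j)$, exactly the condition appearing in \eqref{eq:GKM}.

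Feeding this edge data into the GKM theorem yields the claimed description: the surviving relations are that $\alpha(w)-\alpha(w')$ be divisible by $t_{w(i)}-t_{w(j)}$ whenever $w'=w(j\ i)$ with $j<i$ and $i\leq h(j)$. The step I expect to require the most care is the identification of the invariant $\mathbb{P}^1$'s lying in $\Xh$, and in particular the ``$a,b$ both nonzero'' argument showing that distinctness of the eigenvalues of $S$ is exactly what obstructs the Hessenberg condition once $h(k)<i$; this is where the regular semisimple hypothesis enters decisively. One should also check that the pairwise non-proportionality of the weights $t_{w(i)}-t_{w(j)}$ at each fixed point legitimizes the GKM description over $\Z$ rather than merely over $\Q$.
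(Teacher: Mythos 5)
Your argument is correct and is essentially the proof behind the cited result: the paper does not reprove Proposition~\ref{proposition:GKM} but quotes \cite[Proposition~4.7]{tymo08}, whose content is exactly the moment-graph computation you carry out --- identifying the $\Tn$-invariant curves of $\Flags(\C^n)$ lying in $\Xh$ and showing, via the distinctness of the eigenvalues of $S$, that the curve joining $w$ and $w(j\ i)$ lies in $\Xh$ if and only if $i\le h(j)$, with weight $t_{w(i)}-t_{w(j)}$. The only point to sharpen is your final sentence: over $\Z$ pairwise non-proportionality of the weights at a fixed point is not sufficient; one needs the weights to be primitive and pairwise relatively prime in $H^*(B\Tn)$, together with the Bialynicki--Birula paving providing a free $H^*(B\Tn)$-module basis of $H^*_{\Tn}(\Xh)$ --- these are precisely the two conditions recorded in the Remark following the proposition, and both hold here since the weights are of the form $t_a-t_b$.
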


\begin{remark}
Proposition~$4.7$ in \cite{tymo08} is given in $\C$-coefficients, but it is valid for $\Z$-coefficients as well because of the following two reasons; $(1)$ $\Xh$ has a cellular decomposition whose cells consist of the intersections of $\Xh$ and the Schubert cells, namely a Bialynicki-Birula's cell  decomposition $($see \cite{ma-pr-sh} for details$)$, $(2)$ the weights of the tangential representation of $T$ at each fixed point are primitive vectors in $H^2(B\Tn)$, and they are pairwise relatively prime in $H^*(B\Tn)$ $($cf. \cite[Theorem~3.1]{ha-he-ho}$)$. In fact, the condition $(1)$ means that the Poincar\'{e} duals of the closures of the cells form an $H^*(B\Tn)$-basis of $H_{\Tn}^*(\Xh)$, and the condition $(2)$ ensures that the image of the restriction map $H^*_{\Tn}(\Xh) \into \bigoplus _{w \in \Sn} \Z[t_1,\dots,t_n]$ lies on the subring  \eqref{eq:GKM} and that the images of the Poincar\'{e} duals form an $\Z[t_1,\dots,t_n]$-basis of \eqref{eq:GKM}.
\end{remark}

The set in \eqref{eq:GKM} can be described by the so-called GKM graph whose vertex set $\mathcal{V}$ and edge set $\mathcal{E}$ are given as follows:
\begin{align*}
&\mathcal{V}:=\Sn,\\
&\mathcal{E}:=\{ (w,w')\in \Sn \times \Sn \mid w'=w(j \ i) \ {\rm and} \ j<i \leq h(j) \ {\rm for \ some} \ j, i \}.
\end{align*}
Additionally, we equip each edge $(w,w')$ such that $w'=w(j \ i)$ and $j<i \leq h(j)$ with the data of the polynomial $\pm(t_{w(i)}-t_{w(j)})$ (up to sign) arising in \eqref{eq:GKM}.
We call this labeled graph \textbf{the GKM graph of $X(h)$}, and we denote it by $\Gamma(h)$. In this language, the condition in \eqref{eq:GKM} says that the collection of polynomials $(\alpha(w))_{w\in \Sn}$ satisfies the following: if $w$ and $w'$ are connected in $\Gamma(h)$ by an edge labeled by $t_{w(i)}-t_{w(j)}$, then the difference of the polynomials assigned for $w$ and $w'$ must be divisible by the label.
In this paper, we call the condition described in \eqref{eq:GKM} the GKM condition for $\Gamma(h)$.

\begin{example} \label{example:GKM graph}
Let $n=3$. 
For $h=(2,3,3)$ and $h'=(3,3,3)$, the corresponding GKM graphs of $X(h)$ are depicted in Figure $\ref{pic:GKM graphs}$ where we use the one-line notation for each vertex.\\
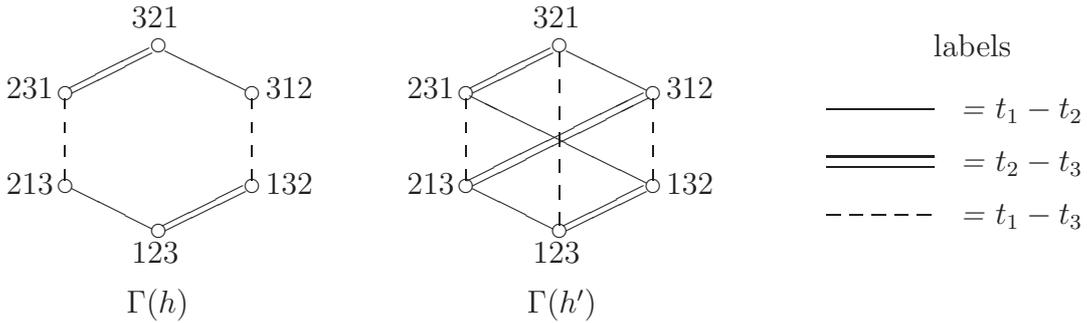
\begin{figure}[h]
\begin{center}
\begin{picture}(400,90)
        \put(50,14){\circle{5}}
        \put(50,84){\circle{5}}
        \put(85,31){\circle{5}}
        \put(85,66){\circle{5}}
        \put(15,31){\circle{5}}
        \put(15,66){\circle{5}}

        \put(47.5,15){\line(-2,1){30}}
        \put(82.5,67){\line(-2,1){30}}
        \put(51.5,16){\line(2,1){30}}
        \put(52.5,14){\line(2,1){30}}
        \put(16.5,68){\line(2,1){30}}
        \put(17.5,66){\line(2,1){30}}
        \put(15,33){\line(0,1){4}}
        \put(15,42){\line(0,1){4}}
        \put(15,51){\line(0,1){4}}
        \put(15,60){\line(0,1){4}}
        \put(85,33){\line(0,1){4}}
        \put(85,42){\line(0,1){4}}
        \put(85,51){\line(0,1){4}}
        \put(85,60){\line(0,1){4}}

        \put(40,2){$123$}
        \put(40,91){$321$}
        \put(90,28){$132$}
        \put(90,64){$312$}
        \put(-7,28){$213$}
        \put(-7,64){$231$}

        \put(38,-17){$\Gamma(h)$}

        \put(200,14){\circle{5}}
        \put(200,84){\circle{5}}
        \put(235,31){\circle{5}}
        \put(235,66){\circle{5}}
        \put(165,31){\circle{5}}
        \put(165,66){\circle{5}}

        \put(232.5,32){\line(-2,1){66}}
        \put(197.5,15){\line(-2,1){30}}
        \put(232.5,67){\line(-2,1){30}}
        \put(165.5,33){\line(2,1){66}}
        \put(167.5,31){\line(2,1){66}}
        \put(201.5,16){\line(2,1){30}}
        \put(202.5,14){\line(2,1){30}}
        \put(166.5,68){\line(2,1){30}}
        \put(167.5,66){\line(2,1){30}}
        \put(200,16){\line(0,1){5}}
        \put(200,26){\line(0,1){5}}
        \put(200,36){\line(0,1){5}}
        \put(200,46){\line(0,1){5}}
        \put(200,56){\line(0,1){5}}
        \put(200,66){\line(0,1){5}}
        \put(200,76){\line(0,1){5}}
        \put(165,33){\line(0,1){4}}
        \put(165,42){\line(0,1){4}}
        \put(165,51){\line(0,1){4}}
        \put(165,60){\line(0,1){4}}
        \put(235,33){\line(0,1){4}}
        \put(235,42){\line(0,1){4}}
        \put(235,51){\line(0,1){4}}
        \put(235,60){\line(0,1){4}}

        \put(190,2){$123$}
        \put(190,91){$321$}
        \put(240,28){$132$}
        \put(240,64){$312$}
        \put(143,28){$213$}
        \put(143,64){$231$}

        \put(188,-17){$\Gamma(h')$}

\put(340,80){{\rm labels}}        
\put(300,60){\line(1,0){40}}
\put(350,56.5){= $t_1-t_2$}
\put(300,42){\line(1,0){40}}
\put(300,38){\line(1,0){40}}
\put(350,36.5){= $t_2-t_3$}
\put(300,20){\line(1,0){5}}
\put(308.5,20){\line(1,0){5}}
\put(317,20){\line(1,0){5}}
\put(325.5,20){\line(1,0){5}}
\put(334,20){\line(1,0){5}}
\put(350,16.5){= $t_1-t_3$}
\end{picture}
\end{center}
\vspace{10pt}
\caption{The GKM graphs $\Gamma(h)$ and $\Gamma(h')$}
\label{pic:GKM graphs}
\end{figure}

\noindent
For example, the collection of polynomials in Figure $\ref{pic:element}$ is an element of $H^*_{\Tn}(X(h))$ but not of $H^*_{\Tn}(X(h'))$. \\
\begin{figure}[h]
\begin{center}
\begin{picture}(100,90)
        \put(50,14){\circle{5}}
        \put(50,84){\circle{5}}
        \put(85,31){\circle{5}}
        \put(85,66){\circle{5}}
        \put(15,31){\circle{5}}
        \put(15,66){\circle{5}}

        \put(47.5,15){\line(-2,1){30}}
        \put(82.5,67){\line(-2,1){30}}
        \put(51.5,16){\line(2,1){30}}
        \put(52.5,14){\line(2,1){30}}
        \put(16.5,68){\line(2,1){30}}
        \put(17.5,66){\line(2,1){30}}
        \put(15,33){\line(0,1){4}}
        \put(15,42){\line(0,1){4}}
        \put(15,51){\line(0,1){4}}
        \put(15,60){\line(0,1){4}}
        \put(85,33){\line(0,1){4}}
        \put(85,42){\line(0,1){4}}
        \put(85,51){\line(0,1){4}}
        \put(85,60){\line(0,1){4}}

        \put(35,2){$t_2-t_3$}
        \put(46,91){$0$}
        \put(90,28){$0$}
        \put(90,64){$0$}
        \put(-22,28){$t_1-t_3$}
        \put(3,64){$0$}

\end{picture}
\end{center}
\vspace{-10pt}
\caption{An element of $H^*_{\Tn}(X(h))$.}
\label{pic:element}
\end{figure}
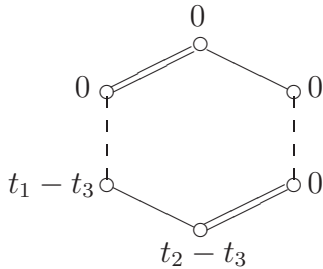
\end{example}

As we pointed out in the Introduction, the restriction map $H^*(\Flags(\C^n))\rightarrow H^*(\Xh)$ is not surjective in general. In fact, whenever $\Xh$ is not $\Flags(\C^n)$, the map is not surjective since the total dimensions of $H^*(\Flags(\C^n))$ and $H^*(\Xh)$ are always the same (Theorem \ref{theorem:Xh} (3)) while the degrees of the highest components are different (Theorem \ref{theorem:Xh} (1)).
However, since we have the graphical presentation of the equivariant cohomology ring $H^*_{\Tn}(\Xh)$ given in Proposition \ref{proposition:GKM}, the surjecitivity of the homomorphism $H^*_{\Tn}(\Xh)\rightarrow H^*(\Xh)$ implies that 
we can use this graphical presentation of $H^*_{\Tn}(\Xh)$ to seek those classes as we will do in Section 4.

\subsection{$\Sn$-action on $H^*_{\Tn}(\Xh)$}
We now describe the  $\Sn$-action on $H^*_{\Tn}(\Xh)$ constructed by Tymoczko \cite{tymo08}. 
For $v \in \Sn$ and $\alpha = (\alpha(w))\in \bigoplus _{w \in \Sn} \Z[t_1,\dots,t_n]$, we define the element $v \cdot \alpha$ by the formula 
\begin{equation}\label{eq:Tymoczko} 
(v\cdot \alpha)(w) := v \cdot \alpha(v^{-1}w) \ \textup{ for all } w \in \Sn
\end{equation}
where $v \cdot f(t_{1},\ldots,t_{n})=f(t_{v(1)},\ldots,t_{v(n)})$ for $f(t_{1},\ldots,t_{n}) \in \Z[t_1,\dots,t_n]$ in the right-hand side. This $\Sn$-action preserves the subset \eqref{eq:GKM}, and hence it defines an $\Sn$-action on the equivariant cohomology $H^*_{\Tn}(\Xh)$ by Proposition~\ref{proposition:GKM}. 
Since we have $v\cdot t_i=t_{v(i)}$ for 
$t_i=(t_i)_{w \in \Sn}$,
the $\Sn$-action on $H^*_{\Tn}(\Xh)$ induces an $\Sn$-action on the ordinary cohomology $H^*(\Xh) \cong H^*_{\Tn}(\Xh)/(t_1,\dots,t_n)$. 

This $\Sn$-action on $H^*(\Xh)$ has a surprising connection with graph theory as follows. 
Shareshian and Wachs formulated a beautiful conjecture relating the chromatic quasisymmetric function of the incomparability graph of a natural unit interval order and Tymoczko's $\Sn$-representation above on the cohomology\footnote[1]{They worked with cohomology with coefficients in $\C$.} of the associated regular semisimple Hessenberg variety (\cite[Conjecture 1.2]{sh-wa11}). 
Also, they determined the coefficients in the Schur basis expansion of the chromatic quasisymmetric function in terms of combinatorics (\cite[Theroem 6.3]{sh-wa14}). 
Recently, the Shareshian-Wachs conjecture was solved by Brosnan and Chow from a geometric viewpoint (\cite{br-ch}), and a combinatorial proof was also given by Guay-Paquet soon later (\cite{gu}). This means that the irreducible decomposition of Tymoczko's $\Sn$-representation on the cohomology $H^*(\Xh;\C)$ is determined for any Hessenberg function $h$.
Interestingly, Shareshian and Wachs conjecture more; they have a further conjecture that the $\Sn$-representation on $H^*(\Xh;\C)$ is a \textit{permutation representation} (see Conjecture \ref{conjecture:h-posi} below), and it is known that if this is true then so is the Stanley-Stembridge conjecture in graph theory which states that the chromatic quasisymmetric function of the incomparability graph of a $(3 + 1)$-free poset is e-positive. That is, the Stanley-Stembridge conjecture is reduced to the following conjecture.

\begin{conjecture}\emph{(}\cite[Conjecture 5.4]{sh-wa11}, \cite[Conjecture 10.4]{sh-wa14}\emph{)}\label{conjecture:h-posi}
The $\Sn$-representation on $H^{2k}(\Xh;\C)$ constructed by Tymoczko is a permutation representation, i.e. a direct sum of  induced representations of the trivial representation from $\S_{\lambda}$ to $\Sn$ where $\S_{\lambda}=\S_{\lambda_1}\times \dots \times \S_{\lambda_{\ell}}$ for $\lambda=(\lambda_1,\ldots,\lambda_{\ell})$.
\end{conjecture}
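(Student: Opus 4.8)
The plan is to recast the conjecture as a positivity statement and to attack it by a linear recursion on the Hessenberg function. A finite-dimensional $\Sn$-representation is a permutation representation of the type described in the conjecture exactly when its Frobenius characteristic is a nonnegative integer combination of the symmetric functions $h_\lambda$, since $h_\lambda=\mathrm{ch}\big(\Ind_{\S_\lambda}^{\Sn}\mathbf 1\big)$. As $\Xh$ has cohomology concentrated in even degrees (Theorem~\ref{theorem:Xh}), the conjecture for all $k$ is therefore equivalent to the single statement that the graded Frobenius characteristic $\sum_k \mathrm{ch}\big(H^{2k}(\Xh;\C)\big)\,q^k$ is a nonnegative combination of complete homogeneous symmetric functions. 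By the now-proven Shareshian-Wachs identity (\cite{br-ch}, \cite{sh-wa14}), this characteristic, after applying the involution $\omega$, equals the chromatic quasisymmetric function of the graph $G(h)$ attached to $h$; thus the conjecture is precisely the assertion that $X_{G(h)}$ is $e$-positive. The explicit GKM model of Proposition~\ref{proposition:GKM}, together with the action formula \eqref{eq:Tymoczko}, renders this characteristic completely computable, which is the starting point for the argument.

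The strategy is to induct along the poset of Hessenberg functions using Guay-Paquet's modular law (\cite{gu}), a three-term linear recursion relating the functions $X(h)$ for Hessenberg functions that differ by a single box. For the base of the induction I would take those $h$ whose associated graph $G(h)$ is a disjoint union of complete graphs; there $\Xh$ is a product of flag varieties, Tymoczko's action is the one induced from the dot actions of the corresponding Young subgroup $\S_\lambda$, and each graded piece is visibly a sum of modules $\Ind_{\S_\lambda}^{\Sn}\mathbf 1$, because for a single flag variety $\omega X_{K_m}=[m]_q!\,h_m$ shows every degree to be a multiple of the trivial representation under the dot action. The inductive step would lift the modular law from the level of Frobenius characteristics to the level of genuine $\Sn$-modules: realize the recursion as a short exact sequence of $\Sn$-modules that splits over $\C$, and then track how chosen permutation bases behave under its maps so that positivity propagates from the base cases to every $h$.

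Throughout, the explicit generators-and-relations presentation and the invariant-subring computation carried out in this paper for the family $h=(h(1),n,\dots,n)$ would serve two purposes: they provide a ring-theoretic model against which to check that the module-level lifting is compatible with cup products, and they supply an independent verification of the recursion in a genuinely non-extremal range of $h$, where the ``new'' cohomology classes --- those not pulled back from $\Flags(\C^n)$ --- actually appear.

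The hard part, and the reason the statement remains a conjecture, is making the lifting preserve positivity. The modular law is manifestly additive on symmetric functions, but it solves for one term in terms of the others and so carries a sign; there is no a priori reason that the module-level maps realizing it send permutation bases to permutation bases, and signed cancellation among exactly those ``new'' classes flagged in the Introduction could destroy positivity at an intermediate step. Ruling out such cancellation --- proving that the graded characteristic is genuinely a nonnegative combination of the $h_\lambda$ rather than an alternating one --- is, via $\omega$, precisely the $e$-positivity asserted by the Stanley-Stembridge conjecture, and this is where any honest attempt meets the central obstacle.
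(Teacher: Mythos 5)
Your proposal is not a proof, and your own final paragraph says so: the inductive step---lifting Guay-Paquet's modular law from symmetric functions to genuine $\Sn$-modules in a way that preserves positivity---is precisely the open content of the Shareshian--Wachs/Stanley--Stembridge positivity problem, and nothing in your outline closes it. That diagnosis is accurate, and it matches the paper's own stance: the statement you were given appears in the paper as Conjecture \ref{conjecture:h-posi}, attributed to Shareshian and Wachs, and the paper does not claim a proof of it in general. Your reductions are sound as far as they go: graded $h$-positivity of the Frobenius characteristic is indeed equivalent to each $H^{2k}(\Xh;\C)$ being a permutation module of the stated type, and via $\omega$ and the proven Shareshian--Wachs identity (\cite{br-ch}, \cite{gu}) this is the (refined) $e$-positivity of the chromatic quasisymmetric function; your base case of Hessenberg functions whose graph is a disjoint union of complete graphs is also correct. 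But the modular law is a three-term linear relation, so extracting any one term requires a subtraction, and $e$-positivity (equivalently $h$-positivity) is not preserved under subtraction; Guay-Paquet's use of the modular law yields a reduction and uniqueness statement, not positivity. The gap you name is genuine, and it is the entire theorem, not a repairable step.

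It is also worth recording what the paper actually proves and how, since it is a route you mention only as a ``verification'' and never use: the conjecture is established for the family $h=(h(1),n,\ldots,n)$ as Corollary \ref{corollary:main1}, by a direct and sign-free argument rather than any recursion. The presentation of Theorem \ref{theorem:maincohomology} yields the monomial basis \eqref{eq:basis1}--\eqref{eq:basis2}, splitting $H^*(\Xh;\C)$ into submodules $M_1$ and $M_2$; by \eqref{eq:Repxy} the classes $\cx_k$ are $\Sn$-invariant while $v\cdot \cy_k=\cy_{v(k)}$, so $R(M_1,\q)$ is a polynomial multiple of $S^{(n)}$ and $R(M_2,\q)$ of $S^{(n-1,1)}$. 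Rewriting $S^{(n-1,1)}=\Ind_{\S_{n-1}\times \S_1}^{\Sn}1-\Ind_{\Sn}^{\Sn}1$, the coefficient of the trivial representation becomes $\frac{(1-\q^{h(1)-1})(1-\q^{n})}{(1-\q)^2}\prod_{j=1}^{n-2}\frac{1-\q^j}{1-\q}$, which is manifestly a polynomial with nonnegative coefficients, so permutation-positivity holds in every degree with no cancellation to rule out. In other words, within the scope of this paper the provable statement is the special case, and it is obtained by making the permutation structure visible on an explicit basis of the cohomology ring---exactly the kind of basis-level control whose absence you correctly identify as the obstruction to your general argument.
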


\bigskip

\section{Poincar\'{e} polynomial of $\Xh$ for $h=(h(1),n,\ldots,n)$}
We described the Poincar\'{e} polynomial of a regular semisimple Hessenberg variety $\Xh$ in Theorem~\ref{theorem:Xh}~(3) as 
$$
\Ph=\displaystyle\sum _{w\in \Sn} \q^{d_h(w)}
$$
where we have $\deg(q)=2$.  
In this section, we give an arithmetic formula for the Poincar\'{e} polynomial $\Ph$ which works for the case $h=(h(1),n,\ldots,n)$, i.e. $1\leq h(1)\leq n$ and $h(j)=n \ (2\leq j\leq n)$.

Recall that we may identify a Hessenberg function with a configuration of the shaded boxes (see Example~\ref{example:HessenbergFunction}). 
We first observe how the Poincar\'{e} polynomial $\Ph$ changes when we add a shaded box to (the configuration of) $h$ so that the resulting configuration corresponds to a Hessenberg function $h'$. We note that the geometric interpretation of this situation is the inclusion $X(h)\subset X(h')$ of codimension $1$ by Theorem \ref{theorem:Xh} (1).

More precisely, let $h$ be a Hessenberg function satisfying the condition $h(r)<h(r+1)$ for some $1 \leq r<n$.
Then, the function defined by
$$
h':=(h(1),\ldots,h(r-1),h(r)+1,h(r+1),\ldots,h(n))
$$ 
is also a Hessenberg function. 
In the language of configurations of shaded boxes, this Hessenberg function $h'$ is obtained from $h$ by placing a new box on the position located at $(h(r)+1)$-th row and the $r$-th column. 
\begin{figure}[h]
\begin{center}
\begin{picture}(200,100)
\put(0,63){\colorbox{gray}}
\put(0,67){\colorbox{gray}}
\put(0,72){\colorbox{gray}}
\put(4,63){\colorbox{gray}}
\put(4,67){\colorbox{gray}}
\put(4,72){\colorbox{gray}}
\put(9,63){\colorbox{gray}}
\put(9,67){\colorbox{gray}}
\put(9,72){\colorbox{gray}}

\put(15,63){\colorbox{gray}}
\put(15,67){\colorbox{gray}}
\put(15,72){\colorbox{gray}}
\put(19,63){\colorbox{gray}}
\put(19,67){\colorbox{gray}}
\put(19,72){\colorbox{gray}}
\put(24,63){\colorbox{gray}}
\put(24,67){\colorbox{gray}}
\put(24,72){\colorbox{gray}}

\put(30,63){\colorbox{gray}}
\put(30,67){\colorbox{gray}}
\put(30,72){\colorbox{gray}}
\put(34,63){\colorbox{gray}}
\put(34,67){\colorbox{gray}}
\put(34,72){\colorbox{gray}}
\put(39,63){\colorbox{gray}}
\put(39,67){\colorbox{gray}}
\put(39,72){\colorbox{gray}}

\put(45,63){\colorbox{gray}}
\put(45,67){\colorbox{gray}}
\put(45,72){\colorbox{gray}}
\put(49,63){\colorbox{gray}}
\put(49,67){\colorbox{gray}}
\put(49,72){\colorbox{gray}}
\put(54,63){\colorbox{gray}}
\put(54,67){\colorbox{gray}}
\put(54,72){\colorbox{gray}}

\put(60,63){\colorbox{gray}}
\put(60,67){\colorbox{gray}}
\put(60,72){\colorbox{gray}}
\put(64,63){\colorbox{gray}}
\put(64,67){\colorbox{gray}}
\put(64,72){\colorbox{gray}}
\put(69,63){\colorbox{gray}}
\put(69,67){\colorbox{gray}}
\put(69,72){\colorbox{gray}}

\put(0,48){\colorbox{gray}}
\put(0,52){\colorbox{gray}}
\put(0,57){\colorbox{gray}}
\put(4,48){\colorbox{gray}}
\put(4,52){\colorbox{gray}}
\put(4,57){\colorbox{gray}}
\put(9,48){\colorbox{gray}}
\put(9,52){\colorbox{gray}}
\put(9,57){\colorbox{gray}}

\put(15,48){\colorbox{gray}}
\put(15,52){\colorbox{gray}}
\put(15,57){\colorbox{gray}}
\put(19,48){\colorbox{gray}}
\put(19,52){\colorbox{gray}}
\put(19,57){\colorbox{gray}}
\put(24,48){\colorbox{gray}}
\put(24,52){\colorbox{gray}}
\put(24,57){\colorbox{gray}}

\put(30,48){\colorbox{gray}}
\put(30,52){\colorbox{gray}}
\put(30,57){\colorbox{gray}}
\put(34,48){\colorbox{gray}}
\put(34,52){\colorbox{gray}}
\put(34,57){\colorbox{gray}}
\put(39,48){\colorbox{gray}}
\put(39,52){\colorbox{gray}}
\put(39,57){\colorbox{gray}}

\put(45,48){\colorbox{gray}}
\put(45,52){\colorbox{gray}}
\put(45,57){\colorbox{gray}}
\put(49,48){\colorbox{gray}}
\put(49,52){\colorbox{gray}}
\put(49,57){\colorbox{gray}}
\put(54,48){\colorbox{gray}}
\put(54,52){\colorbox{gray}}
\put(54,57){\colorbox{gray}}

\put(60,48){\colorbox{gray}}
\put(60,52){\colorbox{gray}}
\put(60,57){\colorbox{gray}}
\put(64,48){\colorbox{gray}}
\put(64,52){\colorbox{gray}}
\put(64,57){\colorbox{gray}}
\put(69,48){\colorbox{gray}}
\put(69,52){\colorbox{gray}}
\put(69,57){\colorbox{gray}}

\put(0,33){\colorbox{gray}}
\put(0,37){\colorbox{gray}}
\put(0,42){\colorbox{gray}}
\put(4,33){\colorbox{gray}}
\put(4,37){\colorbox{gray}}
\put(4,42){\colorbox{gray}}
\put(9,33){\colorbox{gray}}
\put(9,37){\colorbox{gray}}
\put(9,42){\colorbox{gray}}

\put(15,33){\colorbox{gray}}
\put(15,37){\colorbox{gray}}
\put(15,42){\colorbox{gray}}
\put(19,33){\colorbox{gray}}
\put(19,37){\colorbox{gray}}
\put(19,42){\colorbox{gray}}
\put(24,33){\colorbox{gray}}
\put(24,37){\colorbox{gray}}
\put(24,42){\colorbox{gray}}

\put(30,33){\colorbox{gray}}
\put(30,37){\colorbox{gray}}
\put(30,42){\colorbox{gray}}
\put(34,33){\colorbox{gray}}
\put(34,37){\colorbox{gray}}
\put(34,42){\colorbox{gray}}
\put(39,33){\colorbox{gray}}
\put(39,37){\colorbox{gray}}
\put(39,42){\colorbox{gray}}

\put(45,33){\colorbox{gray}}
\put(45,37){\colorbox{gray}}
\put(45,42){\colorbox{gray}}
\put(49,33){\colorbox{gray}}
\put(49,37){\colorbox{gray}}
\put(49,42){\colorbox{gray}}
\put(54,33){\colorbox{gray}}
\put(54,37){\colorbox{gray}}
\put(54,42){\colorbox{gray}}

\put(60,33){\colorbox{gray}}
\put(60,37){\colorbox{gray}}
\put(60,42){\colorbox{gray}}
\put(64,33){\colorbox{gray}}
\put(64,37){\colorbox{gray}}
\put(64,42){\colorbox{gray}}
\put(69,33){\colorbox{gray}}
\put(69,37){\colorbox{gray}}
\put(69,42){\colorbox{gray}}

%
%
\put(30,18){\colorbox{gray}}
\put(30,22){\colorbox{gray}}
\put(30,27){\colorbox{gray}}
\put(34,18){\colorbox{gray}}
\put(34,22){\colorbox{gray}}
\put(34,27){\colorbox{gray}}
\put(39,18){\colorbox{gray}}
\put(39,22){\colorbox{gray}}
\put(39,27){\colorbox{gray}}

\put(45,18){\colorbox{gray}}
\put(45,22){\colorbox{gray}}
\put(45,27){\colorbox{gray}}
\put(49,18){\colorbox{gray}}
\put(49,22){\colorbox{gray}}
\put(49,27){\colorbox{gray}}
\put(54,18){\colorbox{gray}}
\put(54,22){\colorbox{gray}}
\put(54,27){\colorbox{gray}}

\put(60,18){\colorbox{gray}}
\put(60,22){\colorbox{gray}}
\put(60,27){\colorbox{gray}}
\put(64,18){\colorbox{gray}}
\put(64,22){\colorbox{gray}}
\put(64,27){\colorbox{gray}}
\put(69,18){\colorbox{gray}}
\put(69,22){\colorbox{gray}}
\put(69,27){\colorbox{gray}}

%
%
%
\put(45,3){\colorbox{gray}}
\put(45,7){\colorbox{gray}}
\put(45,12){\colorbox{gray}}
\put(49,3){\colorbox{gray}}
\put(49,7){\colorbox{gray}}
\put(49,12){\colorbox{gray}}
\put(54,3){\colorbox{gray}}
\put(54,7){\colorbox{gray}}
\put(54,12){\colorbox{gray}}

\put(60,3){\colorbox{gray}}
\put(60,7){\colorbox{gray}}
\put(60,12){\colorbox{gray}}
\put(64,3){\colorbox{gray}}
\put(64,7){\colorbox{gray}}
\put(64,12){\colorbox{gray}}
\put(69,3){\colorbox{gray}}
\put(69,7){\colorbox{gray}}
\put(69,12){\colorbox{gray}}

\put(0,0){\framebox(15,15)}
\put(15,0){\framebox(15,15)}
\put(30,0){\framebox(15,15)}
\put(45,0){\framebox(15,15)}
\put(60,0){\framebox(15,15)}
\put(0,15){\framebox(15,15)}
\put(15,15){\framebox(15,15)}
\put(30,15){\framebox(15,15)}
\put(45,15){\framebox(15,15)}
\put(60,15){\framebox(15,15)}
\put(0,30){\framebox(15,15)}
\put(15,30){\framebox(15,15)}
\put(30,30){\framebox(15,15)}
\put(45,30){\framebox(15,15)}
\put(60,30){\framebox(15,15)}
\put(0,45){\framebox(15,15)}
\put(15,45){\framebox(15,15)}
\put(30,45){\framebox(15,15)}
\put(45,45){\framebox(15,15)}
\put(60,45){\framebox(15,15)}
\put(0,60){\framebox(15,15)}
\put(15,60){\framebox(15,15)}
\put(30,60){\framebox(15,15)}
\put(45,60){\framebox(15,15)}
\put(60,60){\framebox(15,15)}

\put(100,63){\colorbox{gray}}
\put(100,67){\colorbox{gray}}
\put(100,72){\colorbox{gray}}
\put(104,63){\colorbox{gray}}
\put(104,67){\colorbox{gray}}
\put(104,72){\colorbox{gray}}
\put(109,63){\colorbox{gray}}
\put(109,67){\colorbox{gray}}
\put(109,72){\colorbox{gray}}

\put(115,63){\colorbox{gray}}
\put(115,67){\colorbox{gray}}
\put(115,72){\colorbox{gray}}
\put(119,63){\colorbox{gray}}
\put(119,67){\colorbox{gray}}
\put(119,72){\colorbox{gray}}
\put(124,63){\colorbox{gray}}
\put(124,67){\colorbox{gray}}
\put(124,72){\colorbox{gray}}

\put(130,63){\colorbox{gray}}
\put(130,67){\colorbox{gray}}
\put(130,72){\colorbox{gray}}
\put(134,63){\colorbox{gray}}
\put(134,67){\colorbox{gray}}
\put(134,72){\colorbox{gray}}
\put(139,63){\colorbox{gray}}
\put(139,67){\colorbox{gray}}
\put(139,72){\colorbox{gray}}

\put(145,63){\colorbox{gray}}
\put(145,67){\colorbox{gray}}
\put(145,72){\colorbox{gray}}
\put(149,63){\colorbox{gray}}
\put(149,67){\colorbox{gray}}
\put(149,72){\colorbox{gray}}
\put(154,63){\colorbox{gray}}
\put(154,67){\colorbox{gray}}
\put(154,72){\colorbox{gray}}

\put(160,63){\colorbox{gray}}
\put(160,67){\colorbox{gray}}
\put(160,72){\colorbox{gray}}
\put(164,63){\colorbox{gray}}
\put(164,67){\colorbox{gray}}
\put(164,72){\colorbox{gray}}
\put(169,63){\colorbox{gray}}
\put(169,67){\colorbox{gray}}
\put(169,72){\colorbox{gray}}

\put(100,48){\colorbox{gray}}
\put(100,52){\colorbox{gray}}
\put(100,57){\colorbox{gray}}
\put(104,48){\colorbox{gray}}
\put(104,52){\colorbox{gray}}
\put(104,57){\colorbox{gray}}
\put(109,48){\colorbox{gray}}
\put(109,52){\colorbox{gray}}
\put(109,57){\colorbox{gray}}

\put(115,48){\colorbox{gray}}
\put(115,52){\colorbox{gray}}
\put(115,57){\colorbox{gray}}
\put(119,48){\colorbox{gray}}
\put(119,52){\colorbox{gray}}
\put(119,57){\colorbox{gray}}
\put(124,48){\colorbox{gray}}
\put(124,52){\colorbox{gray}}
\put(124,57){\colorbox{gray}}

\put(130,48){\colorbox{gray}}
\put(130,52){\colorbox{gray}}
\put(130,57){\colorbox{gray}}
\put(134,48){\colorbox{gray}}
\put(134,52){\colorbox{gray}}
\put(134,57){\colorbox{gray}}
\put(139,48){\colorbox{gray}}
\put(139,52){\colorbox{gray}}
\put(139,57){\colorbox{gray}}

\put(145,48){\colorbox{gray}}
\put(145,52){\colorbox{gray}}
\put(145,57){\colorbox{gray}}
\put(149,48){\colorbox{gray}}
\put(149,52){\colorbox{gray}}
\put(149,57){\colorbox{gray}}
\put(154,48){\colorbox{gray}}
\put(154,52){\colorbox{gray}}
\put(154,57){\colorbox{gray}}

\put(160,48){\colorbox{gray}}
\put(160,52){\colorbox{gray}}
\put(160,57){\colorbox{gray}}
\put(164,48){\colorbox{gray}}
\put(164,52){\colorbox{gray}}
\put(164,57){\colorbox{gray}}
\put(169,48){\colorbox{gray}}
\put(169,52){\colorbox{gray}}
\put(169,57){\colorbox{gray}}

\put(100,33){\colorbox{gray}}
\put(100,37){\colorbox{gray}}
\put(100,42){\colorbox{gray}}
\put(104,33){\colorbox{gray}}
\put(104,37){\colorbox{gray}}
\put(104,42){\colorbox{gray}}
\put(109,33){\colorbox{gray}}
\put(109,37){\colorbox{gray}}
\put(109,42){\colorbox{gray}}

\put(115,33){\colorbox{gray}}
\put(115,37){\colorbox{gray}}
\put(115,42){\colorbox{gray}}
\put(119,33){\colorbox{gray}}
\put(119,37){\colorbox{gray}}
\put(119,42){\colorbox{gray}}
\put(124,33){\colorbox{gray}}
\put(124,37){\colorbox{gray}}
\put(124,42){\colorbox{gray}}

\put(130,33){\colorbox{gray}}
\put(130,37){\colorbox{gray}}
\put(130,42){\colorbox{gray}}
\put(134,33){\colorbox{gray}}
\put(134,37){\colorbox{gray}}
\put(134,42){\colorbox{gray}}
\put(139,33){\colorbox{gray}}
\put(139,37){\colorbox{gray}}
\put(139,42){\colorbox{gray}}

\put(145,33){\colorbox{gray}}
\put(145,37){\colorbox{gray}}
\put(145,42){\colorbox{gray}}
\put(149,33){\colorbox{gray}}
\put(149,37){\colorbox{gray}}
\put(149,42){\colorbox{gray}}
\put(154,33){\colorbox{gray}}
\put(154,37){\colorbox{gray}}
\put(154,42){\colorbox{gray}}

\put(160,33){\colorbox{gray}}
\put(160,37){\colorbox{gray}}
\put(160,42){\colorbox{gray}}
\put(164,33){\colorbox{gray}}
\put(164,37){\colorbox{gray}}
\put(164,42){\colorbox{gray}}
\put(169,33){\colorbox{gray}}
\put(169,37){\colorbox{gray}}
\put(169,42){\colorbox{gray}}

%
\put(115,18){\colorbox{Gray}}
\put(115,22){\colorbox{Gray}}
\put(115,27){\colorbox{Gray}}
\put(119,18){\colorbox{Gray}}
\put(119,22){\colorbox{Gray}}
\put(119,27){\colorbox{Gray}}
\put(124,18){\colorbox{Gray}}
\put(124,22){\colorbox{Gray}}
\put(124,27){\colorbox{Gray}}

\put(130,18){\colorbox{gray}}
\put(130,22){\colorbox{gray}}
\put(130,27){\colorbox{gray}}
\put(134,18){\colorbox{gray}}
\put(134,22){\colorbox{gray}}
\put(134,27){\colorbox{gray}}
\put(139,18){\colorbox{gray}}
\put(139,22){\colorbox{gray}}
\put(139,27){\colorbox{gray}}

\put(145,18){\colorbox{gray}}
\put(145,22){\colorbox{gray}}
\put(145,27){\colorbox{gray}}
\put(149,18){\colorbox{gray}}
\put(149,22){\colorbox{gray}}
\put(149,27){\colorbox{gray}}
\put(154,18){\colorbox{gray}}
\put(154,22){\colorbox{gray}}
\put(154,27){\colorbox{gray}}

\put(160,18){\colorbox{gray}}
\put(160,22){\colorbox{gray}}
\put(160,27){\colorbox{gray}}
\put(164,18){\colorbox{gray}}
\put(164,22){\colorbox{gray}}
\put(164,27){\colorbox{gray}}
\put(169,18){\colorbox{gray}}
\put(169,22){\colorbox{gray}}
\put(169,27){\colorbox{gray}}

%
%
%
\put(145,3){\colorbox{gray}}
\put(145,7){\colorbox{gray}}
\put(145,12){\colorbox{gray}}
\put(149,3){\colorbox{gray}}
\put(149,7){\colorbox{gray}}
\put(149,12){\colorbox{gray}}
\put(154,3){\colorbox{gray}}
\put(154,7){\colorbox{gray}}
\put(154,12){\colorbox{gray}}

\put(160,3){\colorbox{gray}}
\put(160,7){\colorbox{gray}}
\put(160,12){\colorbox{gray}}
\put(164,3){\colorbox{gray}}
\put(164,7){\colorbox{gray}}
\put(164,12){\colorbox{gray}}
\put(169,3){\colorbox{gray}}
\put(169,7){\colorbox{gray}}
\put(169,12){\colorbox{gray}}

\put(100,0){\framebox(15,15)}
\put(115,0){\framebox(15,15)}
\put(130,0){\framebox(15,15)}
\put(145,0){\framebox(15,15)}
\put(160,0){\framebox(15,15)}
\put(100,15){\framebox(15,15)}
\put(115,15){\framebox(15,15)}
\put(130,15){\framebox(15,15)}
\put(145,15){\framebox(15,15)}
\put(160,15){\framebox(15,15)}
\put(100,30){\framebox(15,15)}
\put(115,30){\framebox(15,15)}
\put(130,30){\framebox(15,15)}
\put(145,30){\framebox(15,15)}
\put(160,30){\framebox(15,15)}
\put(100,45){\framebox(15,15)}
\put(115,45){\framebox(15,15)}
\put(130,45){\framebox(15,15)}
\put(145,45){\framebox(15,15)}
\put(160,45){\framebox(15,15)}
\put(100,60){\framebox(15,15)}
\put(115,60){\framebox(15,15)}
\put(130,60){\framebox(15,15)}
\put(145,60){\framebox(15,15)}
\put(160,60){\framebox(15,15)}

\put(180,20){$\leftarrow (h(r)+1)$-th row}
\put(120,80){$\downarrow$}
\put(115,90){$r$-th column}

\put(35,-15){$h$}
\put(135,-15){$h'$}
\end{picture}
\end{center}
\vspace{5pt}
\caption{The pictures of $h$ and $h'$.}
\end{figure}
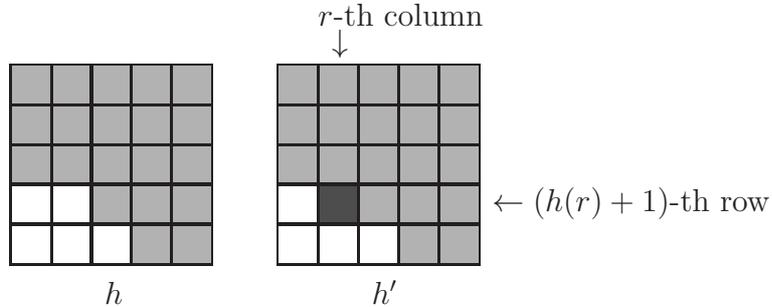

To compare the Poincar\'{e} polynomials $\Ph$ and $\Phh$, 
let us consider the following polynomials:
\begin{align} 
&P_{+}^{h,h'}(\q)=\displaystyle\sum_{i\geq 0} |\{w\in \Sn \mid d_h(w)=i, \ w(r)<w(h'(r)) \}| \q^i, \label{eq:defP+} \\
&P_{-}^{h,h'}(\q)=\displaystyle\sum_{i\geq 0} |\{w\in \Sn \mid d_h(w)=i, \ w(r)>w(h'(r)) \}| \q^i. \label{eq:defP-} 
\end{align}
As we will see below, these polynomials play an important role for understanding the difference of $\Ph$ and $\Phh$.
From Theorem~\ref{theorem:Xh}~(3), it is easy to see that
\begin{align} \label{eq:Ph}
\Ph=P_{+}^{h,h'}(\q)+P_{-}^{h,h'}(\q), \quad \Phh=P_{+}^{h,h'}(\q)+\q P_{-}^{h,h'}(\q)
\end{align}
which imply
\begin{align}\label{eq:Ph-Phh}
\Ph-\Phh=(1-\q) P_{-}^{h,h'}(\q). 
\end{align}
On the other hand, from the Poincar\'{e} duality for $\Xh$ and $\Xhh$ (Theorem~\ref{theorem:Xh}~(1)), we have the following symmetry
$$
\Ph=\q^{\dimX}P(\Xh,\q^{-1}), \quad \Phh=\q^{\dimX+1}P(\Xhh,\q^{-1})
$$
where $\dimX=\dim _{\C}\Xh$. 
Multiplying \eqref{eq:Ph-Phh} for $q$ being replaced with $q^{-1}$ by $q^{\dimX+1}$, we obtain
\begin{equation}\label{eq:qPh-Phh}
\q \Ph -\Phh=\q^{\dimX}(\q-1)P_{-}^{h,h'}(\q^{-1}).
\end{equation}
Now, from \eqref{eq:Ph-Phh} and \eqref{eq:qPh-Phh}, the Poincar\'{e} polynomial $\Ph$ can be written as
$$
\Ph=P_{-}^{h,h'}(\q)+\q^{\dimX}P_{-}^{h,h'}(\q^{-1}).
$$
Comparing this with \eqref{eq:Ph}, we obtain
\begin{align} \label{eq:P+-}
P_{+}^{h,h'}(\q)=\q^{\dimX} P_{-}^{h,h'}(\q^{-1}), \ {\rm equivalently}  \ P_{-}^{h,h'}(\q)=\q^{\dimX}P_{+}^{h,h'}(\q^{-1}).
\end{align}
This means that the Poincare polynomial $\Ph$ is determined from $P_{+}^{h,h'}(\q)$. The next lemma tells us how this polynomial $P_{+}^{h,h'}(\q)$ behaves when we further add a new box to the $r$-th column.

\begin{lemma}\label{lemma:PoincarePolynomial}
Let $h$ be a Hessenberg function satisfying the condition $h(r)+1<h(r+1)$ for some $1 \leq r<n$ so that
\begin{align*}
h'&:=(h(1),\ldots,h(r-1),h(r)+1,h(r+1),\ldots,h(n)), \\
h''&:=(h(1),\ldots,h(r-1),h(r)+2,h(r+1),\ldots,h(n))
\end{align*} 
are Hessenberg functions. If $h(h'(r))=h(h''(r))$, then $P_{+}^{h,h'}(\q)=P_{+}^{h',h''}(\q)$.
\end{lemma}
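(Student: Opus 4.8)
The plan is to combine the combinatorial description of $d_h$ from Theorem~\ref{theorem:Xh}~(3) with an explicit bijection on $\Sn$ given by a single adjacent transposition. Write $a:=h(r)$, so that $h'(r)=a+1$ and $h''(r)=a+2$; since $a\ge r$, the positions $a+1,a+2$ are distinct from $r$, and they lie in $[n]$ because $h''$ is a Hessenberg function. The two hypotheses translate as follows: $h(r)+1<h(r+1)$ means that no column $j\le a$ has height exactly $a+1$ (for $j\le r$ we have $h(j)\le a$, while for $r<j\le a$ we have $h(j)\ge h(r+1)\ge a+2$), and $h(h'(r))=h(h''(r))$ reads $h(a+1)=h(a+2)=:b$, with $b\ge a+2$ by the Hessenberg condition. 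These are precisely the two facts that drive the cancellations below.

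First I would record how $d_h$ and $d_{h'}$ are related. Since $h$ and $h'$ agree except that $h'(r)=h(r)+1$, we have $D_h(w)\subseteq D_{h'}(w)$, and the only pair that can be gained is $(r,a+1)$; hence $d_{h'}(w)=d_h(w)+1$ if $w(r)>w(a+1)$ and $d_{h'}(w)=d_h(w)$ otherwise. Substituting this into the definition \eqref{eq:defP+} of $P_{+}^{h',h''}$ and splitting both $P_{+}^{h,h'}$ and $P_{+}^{h',h''}$ according to the position of $w(r)$ relative to the remaining value, the contribution of the common set $\{w\mid w(r)<w(a+1)\text{ and }w(r)<w(a+2)\}$ cancels, and the identity $P_{+}^{h,h'}(q)=P_{+}^{h',h''}(q)$ reduces to the single equality
\begin{equation*}
\sum_{w\in C} q^{d_h(w)}=q\sum_{w\in D}q^{d_h(w)},
\end{equation*}
where $C:=\{w\in\Sn\mid w(a+2)<w(r)<w(a+1)\}$ and $D:=\{w\in\Sn\mid w(a+1)<w(r)<w(a+2)\}$.

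To prove this last equality I would use the involution $\psi(w):=w\cdot(a+1\ a+2)$ interchanging the values in positions $a+1$ and $a+2$. One checks at once that $\psi$ restricts to a bijection $C\to D$, so it suffices to show $d_h(\psi(w))=d_h(w)-1$ for every $w\in C$. The heart of the matter is to compute the change of $d_h$ under $\psi$ by comparing, pair by pair, which inversions $(j,i)$ counted by $D_h$ are affected. The affected pairs group into those of the form $(j,a+1)$ versus $(j,a+2)$ with $j\le a$, and those of the form $(a+1,i)$ versus $(a+2,i)$ with $i>a+2$. Here both hypotheses force the index range of each matched pair to coincide: since no column $j\le a$ has height exactly $a+1$, the condition $a+1\le h(j)$ is equivalent to $a+2\le h(j)$, so the $(j,a+1)$ and $(j,a+2)$ contributions pair up and cancel; and since $h(a+1)=h(a+2)=b$, the $(a+1,i)$ and $(a+2,i)$ contributions likewise pair up and cancel. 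The only surviving term is the pair $(a+1,a+2)$, which is valid because $b\ge a+2$ and whose inversion status is reversed by $\psi$; as $w(a+1)>w(a+2)$ for $w\in C$, this inversion is lost, giving exactly $d_h(\psi(w))=d_h(w)-1$.

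The main obstacle is this pair-by-pair bookkeeping: one must verify that every contribution other than $(a+1,a+2)$ cancels, and this is precisely where the two hypotheses are indispensable, since a column $j\le a$ of height $a+1$ would break the cancellation of the $(j,a+1)$/$(j,a+2)$ terms, and $h(a+1)\ne h(a+2)$ would break the cancellation of the $(a+1,i)$/$(a+2,i)$ terms. Once this local computation is in place, summing $q^{d_h(w)}=q\cdot q^{d_h(\psi(w))}$ over $w\in C$ through the bijection $\psi\colon C\to D$ yields the displayed equality, and hence $P_{+}^{h,h'}(q)=P_{+}^{h',h''}(q)$.
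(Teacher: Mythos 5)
Your argument is correct and is essentially the paper's proof: both split off the common set $\{w \mid w(r)<w(h'(r)) \text{ and } w(r)<w(h''(r))\}$ and use right multiplication by the transposition $(h'(r)\ h''(r))$ to match up the two remaining pieces, with the same pair-by-pair cancellation driven by the two hypotheses ($h(j)\neq h(r)+1$ for $j\leq h(r)$, and $h(h'(r))=h(h''(r))$). The only cosmetic difference is that you first rewrite $d_{h'}$ as $d_h$ plus a correction coming from the single pair $(r,h'(r))$ and then track how $d_h$ itself changes under the transposition, whereas the paper compares $d_h(w)$ with $d_{h'}(w\tau)$ directly.
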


\begin{proof}
For simplicity, we put 
$$
A:=\{w\in \Sn \mid w(r)<w(h'(r)) \}, \ B:=\{w\in \Sn \mid w(r)<w(h''(r)) \}.
$$
Then, from the definition \eqref{eq:defP+}, our claim is equivalent to the equality
$$
|\{w\in A \mid d_h(w)=i \}|=|\{w\in B \mid d_{h'}(w)=i \}| \ \ \ {\rm for \ each} \ i.
$$
Note first that the transposition $\tau:=(h'(r) \ h''(r))$ interchanging $h'(r)$ and $h''(r)$ makes sense since we have $h'(r)<h''(r)$ by our assumption.
So the multiplication map of $\Sn$ by the transposition $\tau$ from the right gives a bijection from $A$ to $B$ preserving the intersection $A \cap B$. Thus, it is enough to prove the following two claims:
\begin{itemize}
\item[Case 1] : if $w\in A\cap B$, then $d_h(w)=d_{h'}(w)$, 
\item[Case 2] : if $w\in A\setminus A\cap B$, then $w':=w\tau\in B\setminus A\cap B$ satisfies $d_{h}(w)=d_{h'}(w')$.
\end{itemize}

\smallskip

\noindent
Proof of Case 1. If $w\in A\cap B$, then $w(r)<w(h'(r))$, so we have $d_h(w)=d_{h'}(w)$. \\

\noindent
Proof of Case 2. Let $w\in A \setminus A\cap B$ and $w'=w\tau\in B \setminus A\cap B$. Noting that $h''(r)(=h(r)+2)\leq h(r+1)$ from the assumption, 
we can write $w$ and $w'$ in one-line notation as follows:
\begin{align*}
w&=w(1)\cdots w(r) \cdots w(h(r)) \ w(h'(r)) \ w(h''(r)) \cdots w(h(r+1)) \cdots w(n), \\
w'&=w(1)\cdots w(r) \cdots w(h(r)) \ w(h''(r)) \ w(h'(r)) \cdots w(h(r+1)) \cdots w(n).
\end{align*}
The claim we need to show is $|D_{h}(w)|=|D_{h'}(w')|$ where $D_{h}(w)$ is defined in Theorem \ref{theorem:Xh} (2). To prove this, we compare these sets $D_h(w)$ and $D_{h'}(w')$. 
Note that $h$ and $h'$ differ only at $r$-th values and that $w'$ is obtained from $w$ by interchanging $w(h'(r))$ and $w(h''(r))$ in one-line notation as we saw above.
So, with the assumption $h(h'(r))=h(h''(r))$, it follows that $(i,j)\in D_{h}(w)$ is equivalent to $(\tau(i),\tau(j))\in D_{h'}(w')$ unless $(i,j)$ is one of the following:
\begin{align}\label{eq:three pairs}
(r,h'(r)), \ \ (r,h''(r)), \ \ (h'(r),h''(r)).
\end{align}
So we focus on these pairs in what follows.
Since $w\in A \setminus A\cap B$ and $w'\in B \setminus A\cap B$, we have 
$$
w(h''(r))<w(r)<w(h'(r)).
$$
This means that the third pair $(h'(r),h''(r))$ in \eqref{eq:three pairs} is an element of $D_h(w)$ since we have $h''(r)\leq h(h''(r))=h(h'(r))$ by the assumption, but it is not an element of $D_{h'}(w')$ since $w'$ is obtained from $w$ by replacing $w(h'(r))$ and $w(h''(r))$ in the one-line notation of $w$. Namely, we have $$(h'(r),h''(r)) \in D_h(w)\setminus D_{h'}(w').$$
The first pair $(r,h'(r))$ in \eqref{eq:three pairs} is not an element of $D_h(w)$ since $h'(r)\not\leq h(r)$, but it is an element of $D_{h'}(w')$ since $w'(r)=w(r)\not< w(h''(r))=w'(h'(r))$. So we have
$$(r,h'(r))\in D_{h'}(w')\setminus D_h(w).$$
Lastly, the second pair $(r,h''(r))$ in \eqref{eq:three pairs} is not an element of neither $D_h(w)$ nor $D_{h'}(w')$ since we have $h''(r)\not\leq h(r)$ and $w'(r)=w(r)\not> w(h'(r))=w'(h''(r))$, i.e.
\begin{align*}
(r,h''(r)) \notin D_h(w)\cup D_{h'}(w').
\end{align*}
Therefore, we conclude that $|D_{h}(w)|=|D_{h'}(w')|$, as desired.
\end{proof}

Now we can express an arithmetic formula for the Poincar\'{e} polynomial $\Ph$ (cf. Theorem~\ref{theorem:Xh}~(3)) whose Hessenberg function is of the form $h=(h(1),n,\ldots,n)$.

\begin{lemma}\label{lemma:PoincarePolynomailXh}
If $h=(h(1),n,\ldots,n)$, then the Poincar\'{e} polynomial of $\Xh$ is given by
\begin{align} \label{eq:Ph1}
\Ph=\frac{1-\q^{h(1)}}{1-\q}\displaystyle\prod_{j=1}^{n-1} \frac{1-\q^j}{1-\q}+ (n-1)\q^{h(1)-1}\frac{1-\q^{n-h(1)}}{1-\q}\displaystyle\prod_{j=1}^{n-2} \frac{1-\q^j}{1-\q}.
\end{align}
\end{lemma}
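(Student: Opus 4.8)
Throughout set $[m]_q:=\frac{1-q^m}{1-q}$ and $[m]_q!:=\prod_{j=1}^m \frac{1-q^j}{1-q}$, and write $h_k:=(k,n,\ldots,n)$ for $1\le k\le n$, so that $h_n$ is the flag variety with $P(X(h_n),q)=[n]_q!$ and the statement concerns $P(X(h_k),q)$. The plan is to run a downward induction on $k$, using Lemma~\ref{lemma:PoincarePolynomial} to show that a single polynomial governs every step of the recursion.

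First I would apply Lemma~\ref{lemma:PoincarePolynomial} with $r=1$ to the successive pairs $(h_k,h_{k+1})$. Passing from $h_k$ to $h_{k+1}$ adds a box in the first column, and because $h_k(j)=n$ for all $j\ge 2$, the hypotheses $h_k(1)+1<h_k(2)$ and $h_k(h_{k+1}(1))=h_k(h_{k+2}(1))$ (both of the latter values being $n$, since $h_{k+1}(1)=k+1\ge 2$ and $h_{k+2}(1)=k+2\ge 2$) hold for every $1\le k\le n-2$. The lemma then gives $P_+^{h_k,h_{k+1}}(q)=P_+^{h_{k+1},h_{k+2}}(q)$, and chaining these equalities shows that
\[
Q(q):=P_+^{h_k,h_{k+1}}(q)
\]
is independent of $k$ for $1\le k\le n-1$. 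Combining the two identities in \eqref{eq:Ph} for the pair $(h_k,h_{k+1})$ and eliminating $P_-^{h_k,h_{k+1}}$, I obtain the first-order recursion
\[
P(X(h_{k+1}),q)=q\,P(X(h_k),q)+(1-q)\,Q(q)\qquad(1\le k\le n-1).
\]

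Next I would evaluate $Q$ using the convenient representative $Q=P_+^{h_1,h_2}(q)$. For $h_1=(1,n,\ldots,n)$ the set $D_{h_1}(w)$ consists only of the inversions among the positions $2,\ldots,n$, so $d_{h_1}(w)$ equals the inversion number of $(w(2),\ldots,w(n))$, while the defining inequality of $P_+^{h_1,h_2}$ is $w(1)<w(2)$. Fixing $w(1)=a$ and summing over the arrangements of $[n]\setminus\{a\}$ whose first entry is a prescribed $v>a$ (each such choice contributing $q^{v-2}[n-2]_q!$, the factor $q^{v-2}$ recording the $v-2$ elements of $[n]\setminus\{a\}$ lying below $v$), and then summing over $v$ and $a$, yields
\[
Q(q)=[n-2]_q!\sum_{v=2}^{n}(v-1)q^{v-2}.
\]
An elementary telescoping of $(1-q)\sum_{v=2}^n (v-1)q^{v-2}$ then gives
\[
(1-q)Q(q)=[n-1]_q!-(n-1)q^{\,n-1}[n-2]_q!.
\]

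Finally I would close the induction. Let $F_k(q)$ denote the right-hand side of \eqref{eq:Ph1} with $h(1)=k$. Using $[k+1]_q-q[k]_q=1$ and $[n-k-1]_q-[n-k]_q=-q^{\,n-k-1}$, a direct computation gives $F_{k+1}(q)-q\,F_k(q)=[n-1]_q!-(n-1)q^{\,n-1}[n-2]_q!=(1-q)Q(q)$, so $F_k$ obeys the same recursion as $P(X(h_k),q)$; moreover $F_n(q)=[n]_q!=P(X(h_n),q)$ since the second summand of $F_n$ vanishes. Downward induction from $k=n$ then forces $P(X(h_k),q)=F_k(q)$ for all $1\le k\le n$, which is \eqref{eq:Ph1}. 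The main obstacle is the explicit evaluation of $Q$; once the $k$-independence coming from Lemma~\ref{lemma:PoincarePolynomial} is in hand, the remainder is the formal bookkeeping of a single first-order recursion.
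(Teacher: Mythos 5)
Your proof is correct, and it shares its skeleton with the paper's argument --- both hinge on Lemma \ref{lemma:PoincarePolynomial} to make $P_{+}^{h,h'}(\q)$ independent of $h(1)$ and then evaluate it by a direct count for one convenient pair --- but you assemble the pieces differently. The paper evaluates $P_+$ at the top of the chain ($h(1)=n-1$, conditioning on $w(1)$ and $w(n)$), recovers $P_-$ from the Poincar\'e-duality identity \eqref{eq:P+-}, and reads off $\Ph=P_++P_-$ from \eqref{eq:Ph}; you instead evaluate $P_+$ at the bottom ($h=(1,n,\ldots,n)$, conditioning on $w(1)$ and $w(2)$), eliminate $P_-$ between the two identities in \eqref{eq:Ph} to get the first-order recursion $P(X(h_{k+1}),\q)=\q P(X(h_k),\q)+(1-\q)Q(\q)$, and verify that the claimed closed form satisfies the same recursion with the correct value at $k=n$. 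Your route avoids invoking \eqref{eq:P+-} (and hence Poincar\'e duality) entirely, at the cost of the extra algebraic check that $F_{k+1}-\q F_k=(1-\q)Q$; the paper's route is shorter given that \eqref{eq:P+-} has already been established in the preamble of Section 3, and it produces $P_-^{h,h'}$ explicitly along the way, which is mildly more informative. All the hypotheses of Lemma \ref{lemma:PoincarePolynomial} are checked correctly in your chain ($k\le n-2$ is exactly where they hold), your enumeration giving $Q(\q)=[n-2]_q!\sum_{v=2}^{n}(v-1)\q^{v-2}$ agrees with the paper's \eqref{eq:P+}, and the telescoping and induction steps are sound.
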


\begin{proof}
Since it is well-known that 
\begin{align*}
P(\Flags(\C^n),q)=\displaystyle\prod_{j=1}^{n} \frac{1-\q^j}{1-\q}
\end{align*}
which gives the claim \eqref{eq:Ph1} for the case $h(1)=n$, we can assume that $h(1)<n$.
We first prove that 
\begin{align} \label{eq:P+}
P_{+}^{h,h'}(\q)=\big(\displaystyle\sum_{k=1}^{n-1} k \q^{k-1} \big) \displaystyle\prod_{j=1}^{n-2} \frac{1-\q^j}{1-\q}
\end{align}
for $h=(h(1),n,\ldots,n)$ and $h'=(h(1)+1,n,\ldots,n)$. 
From Lemma~\ref{lemma:PoincarePolynomial}, it is enough to prove \eqref{eq:P+} for $h(1)=n-1$ (i.e. $h'(1)=n$).
We fix integers $a$ and $b$ with $1 \leq a<b \leq n$. 
For $w\in\Sn$ satisfying $w(1)=a$ and $w(n)=b$, we have
\begin{align*}
 d_h(w) 
 =& \ |\{ (j,i) \mid 2\leq j<i\leq n-1, \ w(j)>w(i) \}| \\ 
 &\qquad+ |\{ (1,i) \mid 1<i<n, \ a>w(i) \}| \\
 &\qquad+ |\{ (j,n) \mid 1<j<n, \ w(j)>b \}| \\
 =& \ |\{ (j,i) \mid 2\leq j<i\leq n-1, \ w(j)>w(i) \}| +(a-1) + (n-b).
\end{align*}
So it follows that
\begin{align*}
\displaystyle\sum_{ w \in \Sn \atop w(1)=a, w(n)=b} \q^{d_h(w)}&=\q^{(a-1)+(n-b)}\displaystyle \sum_{ v \in \S_{n-2}} \q^{\ell(v)} 
\\
&=\q^{(a-1)+(n-b)}P(\Flags(\C^{n-2}),\q) \notag \\ 
&=\q^{(a-1)+(n-b)}\displaystyle\prod_{j=1}^{n-2} \frac{1-\q^j}{1-\q}. \notag
\end{align*}
where $\ell(v)$ is the number of inversions of $v\in\S_{n-2}$.
Thus, from \eqref{eq:defP+} we obtain
\begin{align*}
P_{+}^{h,h'}(\q)&=\displaystyle\sum_{1 \leq a<b \leq n} \q^{(a-1)+(n-b)}\displaystyle\prod_{j=1}^{n-2} \frac{1-\q^j}{1-\q} \\
                  &=\big(\displaystyle\sum_{1 \leq a<b \leq n} \q^{a-b}\big)\q^{n-1}\displaystyle\prod_{j=1}^{n-2} \frac{1-\q^j}{1-\q} \\
                  &=\big(\displaystyle\sum_{k=1}^{n-1} k \q^{k-1} \big) \displaystyle\prod_{j=1}^{n-2} \frac{1-\q^j}{1-\q}
\end{align*}
which is exactly \eqref{eq:P+} as we claimed.
Hence, from \eqref{eq:P+-} we obtain 
\begin{align*} 
P_{-}^{h,h'}(\q)=\q^{h(1)-1}\big(\displaystyle\sum_{k=1}^{n-1} (n-k) \q^{k-1} \big) \displaystyle\prod_{j=1}^{n-2} \frac{1-\q^j}{1-\q}.
\end{align*}
Now, the claim \eqref{eq:Ph1} follows from \eqref{eq:Ph}.
\end{proof}

\bigskip

\section{Cohomology rings of $\Xh$ for $h=(h(1),n,\ldots,n)$}

In this section, we give an explicit presentation of the cohomology rings $H^*(\Xh)$ of regular semisimple Hessenberg varieties $\Xh$ for $h=(h(1),n,\ldots,n)$ in terms of ring generators and their relations.
Through this presentation, we will determine the $\Sn$-representation on $H^*(\Xh;\C)$ in Section \ref{subsec:Sn-rep}.
We will also give an explicit presentation of the $\Sn$-invariant subring $H^*(\Xh)^{\Sn}$ with $\Z$-coefficients in Section \ref{subsec:inv-subring}, and compare with the fact that $H^*(\Xh;\Q)^{\Sn}$ is isomorphic to the cohomology ring of regular nilpotent Hessenberg variety with the same Hessenberg function in $\Q$-coefficients.

\subsection{The ring structure of $H^*(\Xh)$}

As explained in Section \ref{sec:background}, the restriction map $H^*(\Flags(\C^n))\rightarrow H^*(\Xh)$ is not surjective in general, and it is not obvious how to find ring generators of $H^*(\Xh)$. For this purpose, we first study the $\Tn$-equivariant cohomology ring $H^*_{\Tn}(\Xh)$ through its graphical presentation given in Proposition \ref{proposition:GKM}, and we study certain classes which do not come from $H^*_{\Tn}(\Flags(\C^n))$ in general.

For any Hessenberg function $h$, we first define classes $\x_k$ and $\y_{k}$ in the equivariant cohomology $H^*_{\Tn}(\Xh)$ for $1\leq k \leq n$ as follows. For $w\in \Sn$, let\footnote[2]{The class $\y_{k}$ defined here is the $\y_{j,k} \in H^{2(h(j)-j)}_{\Tn}(\Xh)$ for $j=1$ introduced by \cite{AHHM}.}
\begin{align}
\x_k(w)&:=t_{w(k)}, \label{eq:defx} \\
\y_{k}(w)&:=\begin{cases}\prod_{\ell=2}^{h(1)} (t_k-t_{w(\ell)}) \ \ &\mbox{if} \ w(1)=k \label{eq:defy} \\ 
0  &\mbox{if} \ w(1)\neq k \end{cases}
\end{align}
where we take the convention $\prod_{\ell=2}^{h(1)} (t_k-t_{w(\ell)})=1$ when $h(1)=1$.
Then $x_k=(x_k(w))_{w\in \Sn}$ and $y_{k}=(y_{k}(w))_{w\in \Sn}$ satisfy the condition in \eqref{eq:GKM} (see \cite[Lemma~10.2]{AHHM}) so that we have $x_k \in H^2_{\Tn}(\Xh)$ and $y_{k}\in H^{2(h(1)-1)}_{\Tn}(\Xh)$. 
Note that the class $y_k$ is supported on the set of permutations $w\in\Sn$ with $w(1)=k$, while $x_k$ is non-zero on any $w$-component. Note that the $x_k$ is the image of the $\Tn$-equivariant first Chern class of the $k$-th tautological line bundle over $\Flags(\C^n)$ under the restriction map $H^*_{\Tn}(\Flags(\C^n)) \to H^*_{\Tn}(\Xh)$. 
It is well-known that 
\begin{equation} \label{eq:CohomologyFlag}
H^*_{\Tn}(\Flags(\C^n)) \cong \Z[X_1,\ldots,X_n,t_1,\ldots,t_n]/(e_i(X)-e_i(t) \mid 1\leq i \leq n)
\end{equation}
by sending $X_k$ to $\x_k$ and $t_k$ to $t_k$ for all $k=1,\ldots,n$.
Here, $e_i(X)$ (resp. $e_i(t)$) is the $i$-th elementary symmetric polynomial in the variables $X_1,\ldots,X_n$ (resp. $t_1,\ldots,t_n$).
In particular, we have
\begin{equation} \label{eq:CohomologyFlag2}
H^*(\Flags(\C^n)) \cong \Z[X_1,\ldots,X_n]/(e_i(X) \mid 1\leq i \leq n).
\end{equation}
We list some of the basic relations between the classes $x_k$ and $\y_{k'}$ in the following lemma.

\begin{lemma} \label{lemma:yk}
The following hold:
\begin{itemize}
\item[$(1)$] $\y_{k}\y_{k'}=0$ \ \ $(1\leq k\neq k'\leq n)$, 
\item[$(2)$] $\x_1\y_{k}=t_k\y_{k}$ \ \ $(1 \leq k \leq n)$,
\item[$(3)$] $\y_{k}\prod_{\ell=h(1)+1}^{n} (t_k-\x_{\ell})=\prod_{\ell=2}^{n} (t_k-\x_{\ell})$ \ \ $(1 \leq k \leq n)$,   
\item[$(4)$] $\sum_{k=1}^{n}\y_{k}=\prod_{\ell=2}^{h(1)} (\x_1-\x_{\ell})$
\end{itemize}
where we take the convention $\prod_{\ell=n+1}^{n} (-x_{\ell})=1$ in $(3)$ and $\prod_{\ell=2}^{1} (x_1-x_{\ell})=1$ in $(4)$. 
\end{lemma}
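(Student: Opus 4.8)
The plan is to prove each of the four identities by checking them componentwise at every vertex $w \in \Sn$, using the fact (from the last item of the list preceding Proposition~\ref{proposition:GKM}) that the restriction map $H^*_{\Tn}(\Xh) \into \bigoplus_{w\in\Sn} \Z[t_1,\dots,t_n]$ is injective. Thus two equivariant classes agree if and only if their $w$-components agree for all $w$, and I will verify each relation by a direct evaluation using the defining formulas \eqref{eq:defx} and \eqref{eq:defy}. The key structural feature I will exploit throughout is that $\y_k$ is supported only on permutations with $w(1)=k$, so its product with anything vanishes off this set.

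For $(1)$, at any $w$ the two factors $\y_k(w)$ and $\y_{k'}(w)$ require $w(1)=k$ and $w(1)=k'$ simultaneously, which is impossible when $k\neq k'$, so every component vanishes. For $(2)$, if $w(1)\neq k$ both sides are $0$; if $w(1)=k$ then $\x_1(w)=t_{w(1)}=t_k$, so $(\x_1\y_k)(w)=t_k\,\y_k(w)=(t_k\y_k)(w)$. For $(3)$, again both sides vanish unless $w(1)=k$; when $w(1)=k$, the left-hand side is $\prod_{\ell=2}^{h(1)}(t_k-t_{w(\ell)})\cdot\prod_{\ell=h(1)+1}^{n}(t_k-t_{w(\ell)})=\prod_{\ell=2}^{n}(t_k-t_{w(\ell)})$, which matches the right-hand side since $\x_\ell(w)=t_{w(\ell)}$. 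For $(4)$, I evaluate $\sum_k \y_k$ at a fixed $w$: exactly one summand is nonzero, namely $k=w(1)$, giving $\prod_{\ell=2}^{h(1)}(t_{w(1)}-t_{w(\ell)})=\prod_{\ell=2}^{h(1)}(\x_1(w)-\x_\ell(w))$, which is the $w$-component of the right-hand side.

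I expect all four to be essentially mechanical once the support observation for $\y_k$ is in place; there is no serious obstacle, only care in bookkeeping the empty-product conventions stated in the hypotheses (the cases $h(1)=1$ in $(4)$ and $h(1)=n$ in $(3)$). The one point meriting a sentence of justification is that identities among components of the form $\prod(t_k - t_{w(\ell)})$ really do lift to genuine equalities in $H^*_{\Tn}(\Xh)$: this is exactly guaranteed by the injectivity of the restriction map, together with the already-cited fact (see \cite[Lemma~10.2]{AHHM}) that $\x_k$ and $\y_k$ lie in $H^*_{\Tn}(\Xh)$, so that both sides of each relation are bona fide equivariant classes and componentwise agreement suffices.
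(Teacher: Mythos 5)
Your proof is correct and is precisely the argument the paper gives: verify each identity componentwise at every $w\in\Sn$ using \eqref{eq:defx} and \eqref{eq:defy}, then invoke injectivity of the restriction map $H^*_{\Tn}(\Xh)\into\bigoplus_{w\in\Sn}\Z[t_1,\dots,t_n]$ (the paper simply states that the componentwise check is easy). The one assertion you leave unjustified is that the right-hand side of $(3)$ vanishes when $w(1)\neq k$; this holds because in that case $k=w(\ell_0)$ for some $\ell_0\in\{2,\dots,n\}$, so the factor $t_k-\x_{\ell_0}(w)=t_k-t_{w(\ell_0)}$ is zero.
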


\begin{proof}
It is easy to check that the $w$-components of both sides are the same for all $w\in \Sn$. 
Since the restriction map $H^*_{\Tn}(\Xh) \to H^*_{\Tn}(\Xh^{\Tn})=\bigoplus _{w \in \Sn} \Z[t_1,\dots,t_n]$ is injective, we are done. 
\end{proof}

Multiplying both sides of (4) in Lemma~\ref{lemma:yk} by $\y_k$, we also have
\begin{equation} \label{eq:yksquare}
\y_k^2=\y_k \displaystyle\prod_{\ell=2}^{h(1)}(t_k-\x_{\ell})
\end{equation}
by (1) and (2) in Lemma~\ref{lemma:yk}.

\begin{proposition}\label{proposition:generator}
If $h=(h(1),n,\ldots,n)$, then the classes $\x_k, \y_k, t_k$ $(1\leq k \leq n)$ generate $H^{*}_{\Tn}(\Xh)$ as a  $\Z$-algebra. 
\end{proposition}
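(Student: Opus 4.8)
The plan is to show that the subalgebra $R \subseteq H^*_{\Tn}(\Xh)$ generated by the classes $\x_k, \y_k, t_k$ $(1\leq k \leq n)$ equals all of $H^*_{\Tn}(\Xh)$. Since the restriction map $H^*_{\Tn}(\Xh) \into \bigoplus_{w\in\Sn} \Z[t_1,\dots,t_n]$ is injective and its image is the GKM subring \eqref{eq:GKM}, it suffices to produce, for an arbitrary element $\alpha$ of the GKM subring, an expression of $\alpha$ as a polynomial in the $\x_k, \y_k, t_k$ with $\Z$-coefficients. I would argue by induction on the cohomological degree, peeling off the top-degree part of $\alpha$ and reducing to a strictly lower degree.

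First I would exploit the special shape $h=(h(1),n,\ldots,n)$, which makes the GKM condition nearly trivial away from the first coordinate: for an edge $w' = w(j\ i)$ with $2\leq j<i\leq n$ the condition $i\leq h(j)=n$ is automatic, so the only nontrivial divisibility constraints on $\alpha$ involve transpositions $(1\ i)$ with $i \leq h(1)$. The key structural observation is then that the classes $\y_k$, supported on $\{w : w(1)=k\}$, are precisely the tools needed to detect and cancel the failure of an element to extend from $H^*_{\Tn}(\Flags(\C^n))$. Concretely, I would compare $\alpha$ against the image of $H^*_{\Tn}(\Flags(\C^n))$, which by \eqref{eq:CohomologyFlag} is generated over $\Z[t_1,\dots,t_n]$ by the $\x_k$: the difference measures the part of $\alpha$ that does not come from the flag variety, and I expect this part to be expressible through the $\y_k$ using relation $(3)$ of Lemma~\ref{lemma:yk}, namely $\y_{k}\prod_{\ell=h(1)+1}^{n}(t_k-\x_\ell)=\prod_{\ell=2}^{n}(t_k-\x_\ell)$, which relates products of the $\x_\ell$ to the supported classes $\y_k$.

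The main technical step, and the one I expect to be the principal obstacle, is a rank/basis count: I would want to show that the $\Z[t_1,\dots,t_n]$-span of suitable monomials in $\x_k$ and $\y_k$ already fills out the GKM subring. For this I would use the Poincaré polynomial computed in Lemma~\ref{lemma:PoincarePolynomailXh}: the two summands in \eqref{eq:Ph1} should match a decomposition of candidate generators into a ``flag-variety part'' (spanned by monomials in the $\x_k$, contributing the term $\frac{1-q^{h(1)}}{1-q}\prod_{j=1}^{n-1}\frac{1-q^j}{1-q}$) and a ``$\y$-part'' (contributing the second summand with its factor $(n-1)$ reflecting that $\y_k$ is supported on permutations grouped by the value $w(1)=k$). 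Verifying that these candidate classes are linearly independent over $\Z[t_1,\dots,t_n]$ and exhaust the subring \eqref{eq:GKM} — rather than merely counting dimensions — will require the divisibility bookkeeping at the edges through the identity vertex, where relation $(2)$ ($\x_1\y_k=t_k\y_k$) and the squaring relation \eqref{eq:yksquare} keep the $\y$-degrees under control.

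Once surjectivity of $R \to H^*_{\Tn}(\Xh)$ is established on each graded piece, I would conclude by descending induction, and the injectivity of the restriction map guarantees that no relations are lost in passing between the abstract algebra and its image. I would emphasize that the argument is genuinely specific to $h=(h(1),n,\ldots,n)$: it is exactly the collapse of the higher GKM conditions to the single index $h(1)$ that lets a finite, explicit list of $\y_k$ capture everything outside the image of the flag variety.
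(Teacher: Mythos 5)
Your proposal correctly identifies the role of the classes $\y_k$ (capturing what does not come from the flag variety) and the special structure of $\Gamma(h)$ for $h=(h(1),n,\ldots,n)$, but the mechanism you offer for actually proving generation does not close. The central step you propose is a rank/Hilbert-series comparison against the Poincar\'e polynomial of Lemma \ref{lemma:PoincarePolynomailXh}. Over $\Z$ this cannot establish surjectivity: a graded $\Z[t_1,\dots,t_n]$-submodule of $H^*_{\Tn}(\Xh)$ spanned by your candidate monomials could have the same graded ranks as the ambient module and still sit inside it with finite index (compare $2\Z\subset\Z$). Even over $\Q$, the count only becomes usable after you have shown both that your candidate monomials are linearly independent and that they span --- and the latter is exactly the statement being proved; you acknowledge that this requires ``divisibility bookkeeping'' but never supply it, and that bookkeeping is the entire content of the proposition. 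Likewise, ``comparing $\alpha$ against the image of $H^*_{\Tn}(\Flags(\C^n))$'' presupposes a way of splitting an arbitrary GKM class into a flag-variety part plus a $\y$-part, which is not given. A smaller point: the divisibility constraints attached to edges $w'=w(j\ i)$ with $2\le j<i\le n$ are not trivial --- those edges are all present and impose genuine conditions on $\alpha$; what is special about this $h$ is only that the edges $(1\ i)$ with $i>h(1)$ are absent.

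The paper supplies the missing mechanism by induction on $n$: it filters $\Sn=\bigcup_{i=1}^n\Sn^i$ with $\Sn^i=\{w\mid w(i)=n\}$, identifies the full subgraph of $\Gamma(h)$ on $\Sn^i$ with the GKM graph of a smaller Hessenberg function $h^i$, and then, assuming $f$ vanishes on $\bigcup_{p<q}\Sn^p$, uses the edges joining $\Sn^q$ to the earlier pieces to extract an explicit factor $\prod_{p}(t_{w(p)}-t_n)$ from $f(w)$; the quotient is shown to satisfy the GKM condition for the smaller graph and is handled by the inductive hypothesis, after which the correction term is rewritten in terms of $\x_k$, $\y_k$, $t_k$ on the whole of $\Sn$. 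Some version of this explicit extraction (or another concrete spanning argument) is indispensable. The Poincar\'e polynomial enters the paper only later, in the proof of Theorem \ref{theorem:maincohomology}, where the counting goes in the safe direction: a surjection from a $\Z$-module generated by $a_d$ elements onto a free $\Z$-module of rank $a_d$ is an isomorphism.
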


\begin{proof}
We prove this proposition by induction on $n$ along the idea in \cite{fu-is-ma}. 
We decompose the symmetric group $\Sn$ into the following subsets for $i=1,\ldots,n$:
$$
\Sn ^i:=\{w\in \Sn \mid w(i)=n\}.
$$
Namely, we have $\Sn=\bigcup_{i=1}^n\Sn^i$ which is a disjoint union.
Let $\Gamma (h)$ be the GKM graph of $\Xh$ described in Section \ref{sec:background}. 
We denote by $\Gamma ^{i}(h)$ the full subgraph of $\Gamma (h)$ whose vertex set is $\Sn^i$. 
We think of $h$ as a configuration of the boxes as explained in  Section \ref{subsec:background}, and 
let $h^{i}$ be a Hessenberg function
obtained by removing all the boxes in the $i$-th row and all the boxes in the $i$-th column (See Figure \ref{pic:h^i}). Then $\Gamma ^{i}(h)$ is naturally identified with $\Gamma (h^{i})$ so that we can apply the inductive assumption to $\Gamma ^{i}(h)$. 

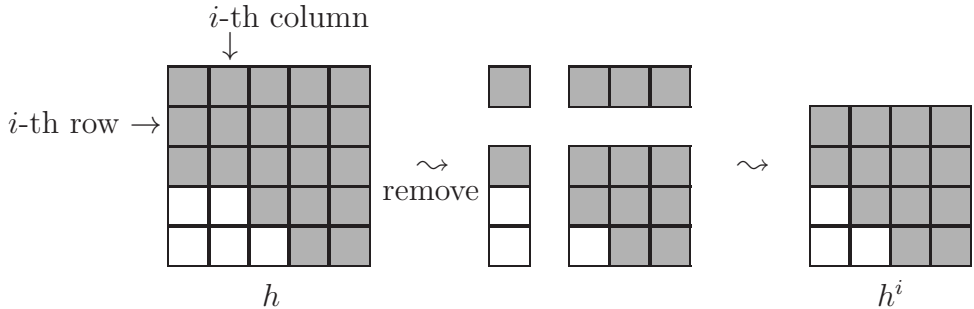
\begin{figure}[h]
\begin{center}
\begin{picture}(240,100)
\put(0,63){\colorbox{gray}}
\put(0,67){\colorbox{gray}}
\put(0,72){\colorbox{gray}}
\put(4,63){\colorbox{gray}}
\put(4,67){\colorbox{gray}}
\put(4,72){\colorbox{gray}}
\put(9,63){\colorbox{gray}}
\put(9,67){\colorbox{gray}}
\put(9,72){\colorbox{gray}}

\put(15,63){\colorbox{gray}}
\put(15,67){\colorbox{gray}}
\put(15,72){\colorbox{gray}}
\put(19,63){\colorbox{gray}}
\put(19,67){\colorbox{gray}}
\put(19,72){\colorbox{gray}}
\put(24,63){\colorbox{gray}}
\put(24,67){\colorbox{gray}}
\put(24,72){\colorbox{gray}}

\put(30,63){\colorbox{gray}}
\put(30,67){\colorbox{gray}}
\put(30,72){\colorbox{gray}}
\put(34,63){\colorbox{gray}}
\put(34,67){\colorbox{gray}}
\put(34,72){\colorbox{gray}}
\put(39,63){\colorbox{gray}}
\put(39,67){\colorbox{gray}}
\put(39,72){\colorbox{gray}}

\put(45,63){\colorbox{gray}}
\put(45,67){\colorbox{gray}}
\put(45,72){\colorbox{gray}}
\put(49,63){\colorbox{gray}}
\put(49,67){\colorbox{gray}}
\put(49,72){\colorbox{gray}}
\put(54,63){\colorbox{gray}}
\put(54,67){\colorbox{gray}}
\put(54,72){\colorbox{gray}}

\put(60,63){\colorbox{gray}}
\put(60,67){\colorbox{gray}}
\put(60,72){\colorbox{gray}}
\put(64,63){\colorbox{gray}}
\put(64,67){\colorbox{gray}}
\put(64,72){\colorbox{gray}}
\put(69,63){\colorbox{gray}}
\put(69,67){\colorbox{gray}}
\put(69,72){\colorbox{gray}}

\put(0,48){\colorbox{gray}}
\put(0,52){\colorbox{gray}}
\put(0,57){\colorbox{gray}}
\put(4,48){\colorbox{gray}}
\put(4,52){\colorbox{gray}}
\put(4,57){\colorbox{gray}}
\put(9,48){\colorbox{gray}}
\put(9,52){\colorbox{gray}}
\put(9,57){\colorbox{gray}}

\put(15,48){\colorbox{gray}}
\put(15,52){\colorbox{gray}}
\put(15,57){\colorbox{gray}}
\put(19,48){\colorbox{gray}}
\put(19,52){\colorbox{gray}}
\put(19,57){\colorbox{gray}}
\put(24,48){\colorbox{gray}}
\put(24,52){\colorbox{gray}}
\put(24,57){\colorbox{gray}}

\put(30,48){\colorbox{gray}}
\put(30,52){\colorbox{gray}}
\put(30,57){\colorbox{gray}}
\put(34,48){\colorbox{gray}}
\put(34,52){\colorbox{gray}}
\put(34,57){\colorbox{gray}}
\put(39,48){\colorbox{gray}}
\put(39,52){\colorbox{gray}}
\put(39,57){\colorbox{gray}}

\put(45,48){\colorbox{gray}}
\put(45,52){\colorbox{gray}}
\put(45,57){\colorbox{gray}}
\put(49,48){\colorbox{gray}}
\put(49,52){\colorbox{gray}}
\put(49,57){\colorbox{gray}}
\put(54,48){\colorbox{gray}}
\put(54,52){\colorbox{gray}}
\put(54,57){\colorbox{gray}}

\put(60,48){\colorbox{gray}}
\put(60,52){\colorbox{gray}}
\put(60,57){\colorbox{gray}}
\put(64,48){\colorbox{gray}}
\put(64,52){\colorbox{gray}}
\put(64,57){\colorbox{gray}}
\put(69,48){\colorbox{gray}}
\put(69,52){\colorbox{gray}}
\put(69,57){\colorbox{gray}}

\put(0,33){\colorbox{gray}}
\put(0,37){\colorbox{gray}}
\put(0,42){\colorbox{gray}}
\put(4,33){\colorbox{gray}}
\put(4,37){\colorbox{gray}}
\put(4,42){\colorbox{gray}}
\put(9,33){\colorbox{gray}}
\put(9,37){\colorbox{gray}}
\put(9,42){\colorbox{gray}}

\put(15,33){\colorbox{gray}}
\put(15,37){\colorbox{gray}}
\put(15,42){\colorbox{gray}}
\put(19,33){\colorbox{gray}}
\put(19,37){\colorbox{gray}}
\put(19,42){\colorbox{gray}}
\put(24,33){\colorbox{gray}}
\put(24,37){\colorbox{gray}}
\put(24,42){\colorbox{gray}}

\put(30,33){\colorbox{gray}}
\put(30,37){\colorbox{gray}}
\put(30,42){\colorbox{gray}}
\put(34,33){\colorbox{gray}}
\put(34,37){\colorbox{gray}}
\put(34,42){\colorbox{gray}}
\put(39,33){\colorbox{gray}}
\put(39,37){\colorbox{gray}}
\put(39,42){\colorbox{gray}}

\put(45,33){\colorbox{gray}}
\put(45,37){\colorbox{gray}}
\put(45,42){\colorbox{gray}}
\put(49,33){\colorbox{gray}}
\put(49,37){\colorbox{gray}}
\put(49,42){\colorbox{gray}}
\put(54,33){\colorbox{gray}}
\put(54,37){\colorbox{gray}}
\put(54,42){\colorbox{gray}}

\put(60,33){\colorbox{gray}}
\put(60,37){\colorbox{gray}}
\put(60,42){\colorbox{gray}}
\put(64,33){\colorbox{gray}}
\put(64,37){\colorbox{gray}}
\put(64,42){\colorbox{gray}}
\put(69,33){\colorbox{gray}}
\put(69,37){\colorbox{gray}}
\put(69,42){\colorbox{gray}}

%
%
\put(30,18){\colorbox{gray}}
\put(30,22){\colorbox{gray}}
\put(30,27){\colorbox{gray}}
\put(34,18){\colorbox{gray}}
\put(34,22){\colorbox{gray}}
\put(34,27){\colorbox{gray}}
\put(39,18){\colorbox{gray}}
\put(39,22){\colorbox{gray}}
\put(39,27){\colorbox{gray}}

\put(45,18){\colorbox{gray}}
\put(45,22){\colorbox{gray}}
\put(45,27){\colorbox{gray}}
\put(49,18){\colorbox{gray}}
\put(49,22){\colorbox{gray}}
\put(49,27){\colorbox{gray}}
\put(54,18){\colorbox{gray}}
\put(54,22){\colorbox{gray}}
\put(54,27){\colorbox{gray}}

\put(60,18){\colorbox{gray}}
\put(60,22){\colorbox{gray}}
\put(60,27){\colorbox{gray}}
\put(64,18){\colorbox{gray}}
\put(64,22){\colorbox{gray}}
\put(64,27){\colorbox{gray}}
\put(69,18){\colorbox{gray}}
\put(69,22){\colorbox{gray}}
\put(69,27){\colorbox{gray}}

%
%
%
\put(45,3){\colorbox{gray}}
\put(45,7){\colorbox{gray}}
\put(45,12){\colorbox{gray}}
\put(49,3){\colorbox{gray}}
\put(49,7){\colorbox{gray}}
\put(49,12){\colorbox{gray}}
\put(54,3){\colorbox{gray}}
\put(54,7){\colorbox{gray}}
\put(54,12){\colorbox{gray}}

\put(60,3){\colorbox{gray}}
\put(60,7){\colorbox{gray}}
\put(60,12){\colorbox{gray}}
\put(64,3){\colorbox{gray}}
\put(64,7){\colorbox{gray}}
\put(64,12){\colorbox{gray}}
\put(69,3){\colorbox{gray}}
\put(69,7){\colorbox{gray}}
\put(69,12){\colorbox{gray}}

\put(0,0){\framebox(15,15)}
\put(15,0){\framebox(15,15)}
\put(30,0){\framebox(15,15)}
\put(45,0){\framebox(15,15)}
\put(60,0){\framebox(15,15)}
\put(0,15){\framebox(15,15)}
\put(15,15){\framebox(15,15)}
\put(30,15){\framebox(15,15)}
\put(45,15){\framebox(15,15)}
\put(60,15){\framebox(15,15)}
\put(0,30){\framebox(15,15)}
\put(15,30){\framebox(15,15)}
\put(30,30){\framebox(15,15)}
\put(45,30){\framebox(15,15)}
\put(60,30){\framebox(15,15)}
\put(0,45){\framebox(15,15)}
\put(15,45){\framebox(15,15)}
\put(30,45){\framebox(15,15)}
\put(45,45){\framebox(15,15)}
\put(60,45){\framebox(15,15)}
\put(0,60){\framebox(15,15)}
\put(15,60){\framebox(15,15)}
\put(30,60){\framebox(15,15)}
\put(45,60){\framebox(15,15)}
\put(60,60){\framebox(15,15)}

\put(120,63){\colorbox{gray}}
\put(120,67){\colorbox{gray}}
\put(120,72){\colorbox{gray}}
\put(124,63){\colorbox{gray}}
\put(124,67){\colorbox{gray}}
\put(124,72){\colorbox{gray}}
\put(129,63){\colorbox{gray}}
\put(129,67){\colorbox{gray}}
\put(129,72){\colorbox{gray}}


\put(150,63){\colorbox{gray}}
\put(150,67){\colorbox{gray}}
\put(150,72){\colorbox{gray}}
\put(154,63){\colorbox{gray}}
\put(154,67){\colorbox{gray}}
\put(154,72){\colorbox{gray}}
\put(159,63){\colorbox{gray}}
\put(159,67){\colorbox{gray}}
\put(159,72){\colorbox{gray}}

\put(165,63){\colorbox{gray}}
\put(165,67){\colorbox{gray}}
\put(165,72){\colorbox{gray}}
\put(169,63){\colorbox{gray}}
\put(169,67){\colorbox{gray}}
\put(169,72){\colorbox{gray}}
\put(174,63){\colorbox{gray}}
\put(174,67){\colorbox{gray}}
\put(174,72){\colorbox{gray}}

\put(180,63){\colorbox{gray}}
\put(180,67){\colorbox{gray}}
\put(180,72){\colorbox{gray}}
\put(184,63){\colorbox{gray}}
\put(184,67){\colorbox{gray}}
\put(184,72){\colorbox{gray}}
\put(189,63){\colorbox{gray}}
\put(189,67){\colorbox{gray}}
\put(189,72){\colorbox{gray}}

%
%
%
%

\put(120,33){\colorbox{gray}}
\put(120,37){\colorbox{gray}}
\put(120,42){\colorbox{gray}}
\put(124,33){\colorbox{gray}}
\put(124,37){\colorbox{gray}}
\put(124,42){\colorbox{gray}}
\put(129,33){\colorbox{gray}}
\put(129,37){\colorbox{gray}}
\put(129,42){\colorbox{gray}}


\put(150,33){\colorbox{gray}}
\put(150,37){\colorbox{gray}}
\put(150,42){\colorbox{gray}}
\put(154,33){\colorbox{gray}}
\put(154,37){\colorbox{gray}}
\put(154,42){\colorbox{gray}}
\put(159,33){\colorbox{gray}}
\put(159,37){\colorbox{gray}}
\put(159,42){\colorbox{gray}}

\put(165,33){\colorbox{gray}}
\put(165,37){\colorbox{gray}}
\put(165,42){\colorbox{gray}}
\put(169,33){\colorbox{gray}}
\put(169,37){\colorbox{gray}}
\put(169,42){\colorbox{gray}}
\put(174,33){\colorbox{gray}}
\put(174,37){\colorbox{gray}}
\put(174,42){\colorbox{gray}}

\put(180,33){\colorbox{gray}}
\put(180,37){\colorbox{gray}}
\put(180,42){\colorbox{gray}}
\put(184,33){\colorbox{gray}}
\put(184,37){\colorbox{gray}}
\put(184,42){\colorbox{gray}}
\put(189,33){\colorbox{gray}}
\put(189,37){\colorbox{gray}}
\put(189,42){\colorbox{gray}}

%

\put(150,18){\colorbox{gray}}
\put(150,22){\colorbox{gray}}
\put(150,27){\colorbox{gray}}
\put(154,18){\colorbox{gray}}
\put(154,22){\colorbox{gray}}
\put(154,27){\colorbox{gray}}
\put(159,18){\colorbox{gray}}
\put(159,22){\colorbox{gray}}
\put(159,27){\colorbox{gray}}

\put(165,18){\colorbox{gray}}
\put(165,22){\colorbox{gray}}
\put(165,27){\colorbox{gray}}
\put(169,18){\colorbox{gray}}
\put(169,22){\colorbox{gray}}
\put(169,27){\colorbox{gray}}
\put(174,18){\colorbox{gray}}
\put(174,22){\colorbox{gray}}
\put(174,27){\colorbox{gray}}

\put(180,18){\colorbox{gray}}
\put(180,22){\colorbox{gray}}
\put(180,27){\colorbox{gray}}
\put(184,18){\colorbox{gray}}
\put(184,22){\colorbox{gray}}
\put(184,27){\colorbox{gray}}
\put(189,18){\colorbox{gray}}
\put(189,22){\colorbox{gray}}
\put(189,27){\colorbox{gray}}

%
%
%
\put(165,3){\colorbox{gray}}
\put(165,7){\colorbox{gray}}
\put(165,12){\colorbox{gray}}
\put(169,3){\colorbox{gray}}
\put(169,7){\colorbox{gray}}
\put(169,12){\colorbox{gray}}
\put(174,3){\colorbox{gray}}
\put(174,7){\colorbox{gray}}
\put(174,12){\colorbox{gray}}

\put(180,3){\colorbox{gray}}
\put(180,7){\colorbox{gray}}
\put(180,12){\colorbox{gray}}
\put(184,3){\colorbox{gray}}
\put(184,7){\colorbox{gray}}
\put(184,12){\colorbox{gray}}
\put(189,3){\colorbox{gray}}
\put(189,7){\colorbox{gray}}
\put(189,12){\colorbox{gray}}

\put(120,0){\framebox(15,15)}
\put(150,0){\framebox(15,15)}
\put(165,0){\framebox(15,15)}
\put(180,0){\framebox(15,15)}
\put(120,15){\framebox(15,15)}
\put(150,15){\framebox(15,15)}
\put(165,15){\framebox(15,15)}
\put(180,15){\framebox(15,15)}
\put(120,30){\framebox(15,15)}
\put(150,30){\framebox(15,15)}
\put(165,30){\framebox(15,15)}
\put(180,30){\framebox(15,15)}
\put(120,60){\framebox(15,15)}
\put(150,60){\framebox(15,15)}
\put(165,60){\framebox(15,15)}
\put(180,60){\framebox(15,15)}

%
%
%
%
%
\put(240,48){\colorbox{gray}}
\put(240,52){\colorbox{gray}}
\put(240,57){\colorbox{gray}}
\put(244,48){\colorbox{gray}}
\put(244,52){\colorbox{gray}}
\put(244,57){\colorbox{gray}}
\put(249,48){\colorbox{gray}}
\put(249,52){\colorbox{gray}}
\put(249,57){\colorbox{gray}}

\put(255,48){\colorbox{gray}}
\put(255,52){\colorbox{gray}}
\put(255,57){\colorbox{gray}}
\put(259,48){\colorbox{gray}}
\put(259,52){\colorbox{gray}}
\put(259,57){\colorbox{gray}}
\put(264,48){\colorbox{gray}}
\put(264,52){\colorbox{gray}}
\put(264,57){\colorbox{gray}}

\put(270,48){\colorbox{gray}}
\put(270,52){\colorbox{gray}}
\put(270,57){\colorbox{gray}}
\put(274,48){\colorbox{gray}}
\put(274,52){\colorbox{gray}}
\put(274,57){\colorbox{gray}}
\put(279,48){\colorbox{gray}}
\put(279,52){\colorbox{gray}}
\put(279,57){\colorbox{gray}}

\put(285,48){\colorbox{gray}}
\put(285,52){\colorbox{gray}}
\put(285,57){\colorbox{gray}}
\put(289,48){\colorbox{gray}}
\put(289,52){\colorbox{gray}}
\put(289,57){\colorbox{gray}}
\put(294,48){\colorbox{gray}}
\put(294,52){\colorbox{gray}}
\put(294,57){\colorbox{gray}}


\put(240,33){\colorbox{gray}}
\put(240,37){\colorbox{gray}}
\put(240,42){\colorbox{gray}}
\put(244,33){\colorbox{gray}}
\put(244,37){\colorbox{gray}}
\put(244,42){\colorbox{gray}}
\put(249,33){\colorbox{gray}}
\put(249,37){\colorbox{gray}}
\put(249,42){\colorbox{gray}}

\put(255,33){\colorbox{gray}}
\put(255,37){\colorbox{gray}}
\put(255,42){\colorbox{gray}}
\put(259,33){\colorbox{gray}}
\put(259,37){\colorbox{gray}}
\put(259,42){\colorbox{gray}}
\put(264,33){\colorbox{gray}}
\put(264,37){\colorbox{gray}}
\put(264,42){\colorbox{gray}}

\put(270,33){\colorbox{gray}}
\put(270,37){\colorbox{gray}}
\put(270,42){\colorbox{gray}}
\put(274,33){\colorbox{gray}}
\put(274,37){\colorbox{gray}}
\put(274,42){\colorbox{gray}}
\put(279,33){\colorbox{gray}}
\put(279,37){\colorbox{gray}}
\put(279,42){\colorbox{gray}}

\put(285,33){\colorbox{gray}}
\put(285,37){\colorbox{gray}}
\put(285,42){\colorbox{gray}}
\put(289,33){\colorbox{gray}}
\put(289,37){\colorbox{gray}}
\put(289,42){\colorbox{gray}}
\put(294,33){\colorbox{gray}}
\put(294,37){\colorbox{gray}}
\put(294,42){\colorbox{gray}}



\put(255,18){\colorbox{gray}}
\put(255,22){\colorbox{gray}}
\put(255,27){\colorbox{gray}}
\put(259,18){\colorbox{gray}}
\put(259,22){\colorbox{gray}}
\put(259,27){\colorbox{gray}}
\put(264,18){\colorbox{gray}}
\put(264,22){\colorbox{gray}}
\put(264,27){\colorbox{gray}}

\put(270,18){\colorbox{gray}}
\put(270,22){\colorbox{gray}}
\put(270,27){\colorbox{gray}}
\put(274,18){\colorbox{gray}}
\put(274,22){\colorbox{gray}}
\put(274,27){\colorbox{gray}}
\put(279,18){\colorbox{gray}}
\put(279,22){\colorbox{gray}}
\put(279,27){\colorbox{gray}}

\put(285,18){\colorbox{gray}}
\put(285,22){\colorbox{gray}}
\put(285,27){\colorbox{gray}}
\put(289,18){\colorbox{gray}}
\put(289,22){\colorbox{gray}}
\put(289,27){\colorbox{gray}}
\put(294,18){\colorbox{gray}}
\put(294,22){\colorbox{gray}}
\put(294,27){\colorbox{gray}}


%
%
\put(270,3){\colorbox{gray}}
\put(270,7){\colorbox{gray}}
\put(270,12){\colorbox{gray}}
\put(274,3){\colorbox{gray}}
\put(274,7){\colorbox{gray}}
\put(274,12){\colorbox{gray}}
\put(279,3){\colorbox{gray}}
\put(279,7){\colorbox{gray}}
\put(279,12){\colorbox{gray}}

\put(285,3){\colorbox{gray}}
\put(285,7){\colorbox{gray}}
\put(285,12){\colorbox{gray}}
\put(289,3){\colorbox{gray}}
\put(289,7){\colorbox{gray}}
\put(289,12){\colorbox{gray}}
\put(294,3){\colorbox{gray}}
\put(294,7){\colorbox{gray}}
\put(294,12){\colorbox{gray}}


\put(240,0){\framebox(15,15)}
\put(255,0){\framebox(15,15)}
\put(270,0){\framebox(15,15)}
\put(285,0){\framebox(15,15)}
\put(240,15){\framebox(15,15)}
\put(255,15){\framebox(15,15)}
\put(270,15){\framebox(15,15)}
\put(285,15){\framebox(15,15)}
\put(240,30){\framebox(15,15)}
\put(255,30){\framebox(15,15)}
\put(270,30){\framebox(15,15)}
\put(285,30){\framebox(15,15)}
\put(240,45){\framebox(15,15)}
\put(255,45){\framebox(15,15)}
\put(270,45){\framebox(15,15)}
\put(285,45){\framebox(15,15)}

\put(-60,50){$i$-th row $\rightarrow$}
\put(20,80){$\downarrow$}
\put(15,90){$i$-th column}

\put(93,35){$\leadsto$}
\put(80,25){remove}

\put(213,35){$\leadsto$}

\put(35,-15){$h$}
\put(265,-15){$h^i$}
\end{picture}
\end{center}
\caption{The configuration corresponding to $h^i$.}
\label{pic:h^i}
\end{figure}

Let $f$ be an arbitrary element of $H^*_{\Tn}(\Xh)$. We show that $f$ can be written as a polynomial in the variables $\x_k, \y_k, t_k$ ($1\leq k \leq n$) as follows.\vspace{5pt}\\ 
{\bf Step~1.} 
We prove that $f$ can be written as a polynomial $F_1$ in the variables $\x_k, \y_k, t_k$ ($1\leq k \leq n$) on $\Sn^1$.
In particular, $f-F_1$ is equal to $0$ on $\Sn^1$, so we may assume that $f=0$ on $\Sn^1$. \\
{\bf Step~2.} Assume that $f=0$ on $\bigcup_{p=1}^{q-1} \Sn^p$ for some $q\geq 2$. 
We prove that there is a polynomial $F_q$ in the variables $\x_k, \y_k, t_k$ ($1\leq k \leq n$) such that $f-F_q$ is zero on $\bigcup_{p=1}^{q} \Sn^p$. \\

\noindent
{\bf Proof of Step~1.} 
Since $\Gamma ^{1}(h)=\Gamma (h^{1})$ is the GKM graph of $\Flags(\C^{n-1})$, 
$f$ can be written as some polynomial in variables $\x_k^{(1)}, t_k^{(1)}$ ($1\leq k \leq n-1$) on $\Sn^1$ by \eqref{eq:CohomologyFlag}. 
Here, $\x_k^{(1)}$ and $t_k^{(1)}$ on $\Sn^1$ are defined as follows: we identify $\Sn^1$ with $\S_{n-1}$ by the correspondence
$$
\Sn^1 \ni w \mapsto w(2)w(3)\cdots w(n) \in \S_{n-1} \ \ ({\rm in \ one\mathchar`-line \ notation}),
$$
and $\x_k^{(1)}=(\x_k^{(1)}(w))_{w\in\Sn^1}$ and $t_k^{(1)}=(t_k^{(1)}(w))_{w\in\Sn^1}$ are defined by
\begin{align*}
\x_k^{(1)}(w)= t_{w(k+1)}, \ t_k^{(1)}(w)=t_k \ \ \ {\rm for} \ w\in \Sn^1.
\end{align*}
This means that we have
\begin{align*}
\x_k^{(1)}= x_{k+1}, \ t_k^{(1)}=t_k \ \ \ {\rm on} \ \Sn^1.
\end{align*}
Hence, $f$ can be written as some polynomial in the variables $\x_k, t_k$ ($1\leq k \leq n$) on $\Sn^1$, as desired.\\

\noindent
{\bf Proof of Step~2.} 
We divide the proof of Step~2 into the following two cases.
\begin{itemize}
 \item[Case 1] : $2 \leq q \leq h(1)$.
 \item[Case 2] : $h(1)+1 \leq q \leq n$.
\end{itemize}
\vspace{3pt}

\noindent
{\bf Case~1.}
Let $w\in \Sn^{q}$ and $(p \ q)$ be the transposition for $1 \leq p \leq q-1$. 
Then $w(p \ q)$ is an element of $\Sn^{p}$ and we have $q \leq h(p)$ because of the assumption for Case 1. 
So the vertices $w$ and $w(p \ q)$ are connected by an edge of $\Gamma(h)$ labeled by the polynomial $t_{w(p)}-t_{w(q)}=t_{w(p)}-t_n$ up to sign. 
By the assumption $f=0$ on $\bigcup_{p=1}^{q-1} \Sn^p$, $f(w)$ is divisible by $t_{w(p)}-t_n$ for any $w\in \Sn^q$, and hence there exists a polynomial $g(w)$ in the variables $t_1,\ldots,t_n$ for each $w\in \Sn^q$ such that
\begin{equation} \label{eq:step2fw}
f(w)=g(w)\displaystyle\prod_{p=1}^{q-1}(t_{w(p)}-t_n)  \ \ \ (w\in \Sn^q).
\end{equation}
Let us write
$$
g(w) = \displaystyle \sum_{r\geq 0} g_r(w)t_n^r
$$
where each $g_r(w)$ is a polynomial in the variables $t_1,\ldots,t_{n-1}$. 

\smallskip
\noindent
{\bf Claim.} Each $g_r=(g_r(w))_{w\in \Sn^q}$ satisfies the GKM condition for $\Gamma ^q(h)$. \smallskip\\
Proof of Claim. 
Let $v\in \Sn^q$ be a vertex connected to the vertex $w$ by an edge in $\Gamma ^q(h)$. 
Then we can write $v=w(j \ i)$ for some $i$ and $j$ with $i\neq q, j\neq q$ since both $w$ and $v$ are in $\Sn^q$.
Since $w(j)=v(i)$, $w(i)=v(j)$, and $w(s)=v(s)$ for $s\neq i, j$, we have
\begin{equation} \label{eq:claim}
\displaystyle \prod_{p=1}^{q-1} (t_{w(p)}-t_n) \equiv \displaystyle \prod_{p=1}^{q-1} (t_{v(p)}-t_n) \not\equiv 0 \ \ \ ({\rm mod} \ t_{w(i)}-t_{w(j)}).
\end{equation}
On the other hand, since $f$ satisfies the GKM condition for $\Gamma (h)$ (in particular, $f$ satisfies the GKM condition for $\Gamma ^q(h)$), we have 
\begin{align*}
f(w) \equiv f(v) \ \ \ ({\rm mod} \ t_{w(i)}-t_{w(j)}), 
\end{align*}
while we have
\begin{align*}
f(w)=g(w)\displaystyle\prod_{p=1}^{q-1}(t_{w(p)}-t_n), \ \ \ f(v)=g(v)\displaystyle\prod_{p=1}^{q-1}(t_{v(p)}-t_n)
\end{align*}
from \eqref{eq:step2fw}.
These together with \eqref{eq:claim} imply that
$$
g(w) \equiv g(v) \ \ \ ({\rm mod} \ t_{w(i)}-t_{w(j)}).
$$
This means that $g_r(w) \equiv g_r(v) \ \ ({\rm mod} \ t_{w(i)}-t_{w(j)})$ for each $r$, so we proved Claim. 
\smallskip

From Claim above and the inductive assumption, each $g_r=(g_r(w))_{w\in \Sn^q}$ can be expressed as a polynomial $G_r(\x^{(q)},\y^{(q)},t^{(q)})$ in the variables $\x_k^{(q)}, \y_k^{(q)}, t_k^{(q)}$ ($1 \leq k \leq n-1$).  
Here, $\x_k^{(q)}=(\x_k^{(q)}(w))_{w\in \Sn^q}$, $\y_k^{(q)}=(\y_k^{(q)}(w))_{w\in \Sn^q}$, and $t_k^{(q)}=(t_k^{(q)}(w))_{w\in \Sn^q}$ are defined by
\begin{align}
\notag
\x_k^{(q)}(w)&:= t_{w(k)} \ \ \ \ \ \ {\rm for} \ k<q, \\  
\notag
\x_k^{(q)}(w)&:= t_{w(k+1)}  \ \ \     {\rm for} \ k\geq q, \\ 
\y_k^{(q)}(w)&:=\begin{cases} \prod_{\ell=2, \ell \neq q}^{h(1)} (t_k-t_{w(\ell)}) \ \ \ & {\rm if} \ w(1)=k, \\ \label{eq:ykq}  
                                     0 & {\rm if} \ w(1)\neq k, \end{cases}\\
\notag
t_k^{(q)}(w)&:=t_k 
\end{align}
for ${w\in \Sn^q}$.
Since this means that we have
\begin{align*}
\x_k^{(q)}= x_k \ (k<q),  \ \ 
\x_k^{(q)}= x_{k+1} \ (k\geq q), \ \
t_k^{(q)}=t_k 
\qquad \text{on $\Sn^q$},
\end{align*}
we denote $G_r(\x^{(q)},\y^{(q)},t^{(q)})$ by $G_r(\x,\y^{(q)},t)$ which is a polynomial in the variables $\x_j, t_j$ ($1 \leq j \leq n$) and $\y_k^{(q)}$ ($1 \leq k \leq n-1$).
From \eqref{eq:yksquare} and (1) in Lemma~\ref{lemma:yk}, 
it follows that the polynomial $G_r(\x,\y^{(q)},t)$ is a linear combination in the variables $\y_k^{(q)}$ ($1 \leq k \leq n-1$) over $\Z[\x_1,\ldots,\x_{n},t_1,\ldots,t_{n}]$. 
In addition, for $w\in \Sn^q$, we have 
\begin{align*}
t_k-t_{w(q)}=t_{w(1)}-t_n=(\x_1-t_n)(w) \ \ \ &{\rm if} \ w(1)=k, \\
\y_k^{(q)}(w)=\y_k(w)=0                    \ \ \ &{\rm if} \ w(1) \neq k
\end{align*}
which mean by \eqref{eq:ykq} that
$$
\y_k^{(q)}(\x_1-t_n)=\y_k \ \ \ {\rm on} \ \Sn^q.
$$
Hence, from \eqref{eq:step2fw}, we can write 
\begin{equation} \label{eq:case2f}
f=u_0(\x,t)\displaystyle\prod_{p=1}^{q-1}(t_n-\x_p)+\displaystyle\sum_{k=1}^{n-1}u_k(\x,t)\y_k\displaystyle\prod_{p=2}^{q-1}(t_n-\x_p) \ \ \ {\rm on} \ \Sn^q
\end{equation}
for some polynomials $u_0,u_1,\ldots,u_{n-1}$ in the variables $\x_k, t_k$ ($1 \leq k \leq n$). It is clear that the first summand of the right-hand side in \eqref{eq:case2f} is equal to $0$ on $\bigcup_{p=1}^{q-1} \Sn^p$. 
The second summand is equal to $0$ on $\bigcup_{p=1}^{q-1} \Sn^p$ as well, because $\y_k=0$ on $\Sn^1$ for $1 \leq k \leq n-1$ by \eqref{eq:defy} and $\prod_{p=2}^{q-1}(t_n-\x_p)=0$ on $\bigcup_{p=2}^{q-1} \Sn^p$. 
Therefore, the polynomial $f$ minus the right-hand side of \eqref{eq:case2f} is equal to $0$ on $\bigcup_{p=1}^{q} \Sn^p$. 

\smallskip

\noindent
{\bf Case 2.} 
Let $w\in \Sn^{q}$. Since $q\geq h(1)+1$ in this case, the vertices $w$ and $w(1 \ q)$ are not connected by an edge of $\Gamma(h)$. However, for $2 \leq p \leq q-1$, the vertices $w$ and $w(p \ q)$ are connected by an edge of $\Gamma(h)$ since $h(p)=n$.
Hence, similarly to Case~1, we can write 
\begin{equation} \label{eq:step3fw}
f(w)=g(w)\displaystyle\prod_{p=2}^{q-1}(t_n-t_{w(p)})  \ \ \ (w\in \Sn^q)
\end{equation}
for some polynomial $g(w)$ in the variables $t_1,\ldots,t_n$ for each $w\in\Sn^q$. 
By an argument similar to the proof of Case~1, $g:=(g(w))_{w\in \Sn^q}$ is a linear combination in the variables $\y_k^{(q)}$ ($1 \leq k \leq n-1$) over $\Z[\x_1,\ldots,\x_{n},t_1,\ldots,t_{n}]$ where $\y_k^{(q)}=(\y_k^{(q)}(w))_{w\in \Sn^q}$ is defined by
\begin{align*}
\y_k^{(q)}(w):=\begin{cases} \prod_{\ell=2}^{h(1)} (t_k-t_{w(\ell)}) \ \ \ & {\rm if} \ w(1)=k,  \\
                                     0 & {\rm if} \ w(1)\neq k \end{cases}
\end{align*}
for ${w\in \Sn^q}$.
Note that since $q \geq h(1)+1$, we do not remove the factor $t_k-t_{w(\ell)}$ for $\ell=q$ in the definition of $\y_k^{(q)}$ (cf. \eqref{eq:ykq}). Thus, in this case we have
$$
y_k^{(q)}(w)=y_k(w) \ \ \ {\rm on} \ \Sn^q.
$$
So, from \eqref{eq:step3fw}, we can write 
\begin{equation} \label{eq:case3f}
f=u(\x,\y,t)\displaystyle\prod_{p=2}^{q-1}(t_n-\x_p) \ \ \ {\rm on} \ \bigcup_{p=2}^{q}\Sn^p
\end{equation}
for some polynomial $u$ in the variables $\x_j, t_j$ ($1 \leq j \leq n$) and $\y_{k}$ ($1 \leq k \leq n-1$). In fact, this equality holds on $\Sn^q$ from the argument above, and both sides of \eqref{eq:case3f} are zero on $\bigcup_{p=2}^{q-1}\Sn^p$.
Note that the left-hand side in \eqref{eq:case3f} is equal to $0$ on $\Sn^1$, but the right-hand side is not necessarily equal to $0$. 
So, $f$ minus the right-hand side may not be equal to $0$ on $\bigcup_{p=1}^{q}\Sn^p$.
However, since we have
\begin{align*}
\y_n(w)= \prod_{\ell=2}^{h(1)} (t_n-t_{w(\ell)}) \ \ \ {\rm for} \ w\in\Sn^1  
\end{align*}
by the definition \eqref{eq:defy}, it follows that
$$
\y_n= \prod_{\ell=2}^{h(1)} (t_n-\x_{\ell}) \ \ \ {\rm on} \ \Sn^1.  
$$
This together with the assumption $q \geq h(1)+1$ implies that
\begin{equation} \label{eq:case3yn}
\displaystyle\prod_{p=2}^{q-1}(t_n-\x_p)=\y_n\displaystyle\prod_{p=h(1)+1}^{q-1}(t_n-\x_p) \ \ \ {\rm on} \ \Sn^1.
\end{equation}
Furthermore, we have that
\begin{equation} \label{eq:case3yn2}
\y_n\displaystyle\prod_{p=h(1)+1}^{q-1}(t_n-\x_p)=0 \ \ \ {\rm on} \ \Sn\setminus \Sn^1
\end{equation}
since $\y_n=0$ on $\Sn\setminus \Sn^1$ by \eqref{eq:defy}.
Now, it follows from \eqref{eq:case3f} that
\begin{equation} \label{eq:case3f2}
f=u(\x,\y,t)\Big(\displaystyle\prod_{p=2}^{q-1}(t_n-\x_p)-\y_n\displaystyle\prod_{p=h(1)+1}^{q-1}(t_n-\x_p)\Big) \ \ \ {\rm on} \ \bigcup_{p=1}^{q}\Sn^p.
\end{equation} 
In fact, \eqref{eq:case3yn} implies that the right-hand side of \eqref{eq:case3f2} is equal to $0$ on $\Sn^1$, and \eqref{eq:case3yn2} implies that the second summand in the right-hand side is equal to $0$ on $\bigcup_{p=2}^{q} \Sn^p$ as well. Therefore, $f$ minus the right-hand side in \eqref{eq:case3f2} is equal to $0$ on $\bigcup_{p=1}^{q} \Sn^p$, as desired. 

Now we have proved the claims of {\bf Step 1} and {\bf Step 2}, and hence by induction on $q$, it follows that $f$ can be written as a polynomial in the variables $\x_k, \y_k, t_k$ ($1\leq k \leq n$).
\end{proof}

We now state the main theorem of this paper where we use the convention for the product symbol as in Lemma \ref{lemma:yk}.

\begin{theorem} \label{theorem:maincohomology}
If $h=(h(1),n,\ldots,n)$, then the cohomology ring of the regular semisimple Hessenberg variety $\Xh$ is given by
$$
H^{*}(\Xh) \cong \Z[X_1,\ldots,X_n,Y_1,\ldots,Y_n]/I
$$
where $\deg(X_k)=2$, $\deg(Y_k)=2(h(1)-1)$ and $I$ is the homogeneous ideal generated by the following five types of elements:
\begin{itemize}
\item[$(1)$] $Y_{k}Y_{k'}$ \ $(1 \leq k\neq k' \leq n)$ 
\item[$(2)$] $X_1Y_{k}$ \ $(1 \leq k \leq n)$ 
\item[$(3)$] $(\prod_{\ell=h(1)+1}^{n} (-X_{\ell}))Y_{k}-\prod_{\ell=2}^{n} (-X_{\ell})$ \ $(1 \leq k \leq n)$  
\item[$(4)$] $\sum_{k=1}^{n}Y_{k}-\prod_{\ell=2}^{h(1)} (X_1-X_{\ell})$ 
\item[$(5)$] the $i$-th elementary symmetric polynomial $e_i(X_1,\ldots,X_n)$  \ $(1 \leq i \leq n)$ 
\end{itemize}
\end{theorem}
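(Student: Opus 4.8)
The plan is to realize the presentation as an explicit ring map and then match ranks degree by degree, feeding off the Poincar\'e polynomial of Lemma~\ref{lemma:PoincarePolynomailXh}. First I would define a graded ring homomorphism $\bar\psi\colon R:=\Z[X_1,\ldots,X_n,Y_1,\ldots,Y_n]/I \to H^*(\Xh)$ by sending $X_k$ and $Y_k$ to the images of the equivariant classes $\x_k$ and $\y_k$ under $H^*_{\Tn}(\Xh)\onto H^*(\Xh)\cong H^*_{\Tn}(\Xh)/(t_1,\ldots,t_n)$. To see $\bar\psi$ is well defined I would check that each of the five families generating $I$ maps to $0$: relations $(1)$ and $(4)$ are exactly parts $(1)$ and $(4)$ of Lemma~\ref{lemma:yk}; relations $(2)$ and $(3)$ follow from parts $(2)$ and $(3)$ of the same lemma after setting $t_k=0$ (note $\x_1\y_k=t_k\y_k$ becomes $0$ in $H^*(\Xh)$); and relation $(5)$ holds because the $\x_k$ are pulled back from $\Flags(\C^n)$, where $e_i(X)=e_i(t)$ by \eqref{eq:CohomologyFlag}, so $e_i(\x)=0$ after $t=0$. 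Surjectivity of $\bar\psi$ is immediate from Proposition~\ref{proposition:generator}: the $t_k$ die in $H^*(\Xh)$, and the surviving generators $\x_k,\y_k$ are the images of $X_k,Y_k$.

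The heart of the proof is then a rank count showing $\bar\psi$ is injective. Since $H^{2d}(\Xh)$ is free of rank equal to the coefficient $[q^d]\Ph$ (Theorem~\ref{theorem:Xh}), it suffices to produce a spanning set of the $\Z$-module $R$ with at most $[q^d]\Ph$ elements in each degree $d$. Indeed, the surjection $\bar\psi$ gives $[q^d]\Ph=\operatorname{rank}H^{2d}(\Xh)\le\operatorname{rank}R_d$, while a spanning set of size $N$ forces $\operatorname{rank}R_d\le N\le[q^d]\Ph$; hence all three are equal, $R_d$ is free on that spanning set, and $\bar\psi$ is an isomorphism in degree $d$.

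To build the spanning set I would reduce an arbitrary class to a normal form using the relations. Relation $(1)$ together with $\y_k^2=\y_k\prod_{\ell=2}^{h(1)}(-X_\ell)$ (which is \eqref{eq:yksquare} at $t=0$, and is also derivable from $(1),(2),(4)$) shows every element has the shape $p(X)+\sum_{k=1}^n Y_k\,q_k(X)$; relation $(2)$ lets me take each $q_k\in\Z[X_2,\ldots,X_n]$; relation $(5)$ reduces the pure part $p(X)$ to the standard monomial basis of $H^*(\Flags(\C^n))$ from \eqref{eq:CohomologyFlag2}; relation $(4)$ trades the top power of $X_1$ for $Y$-terms, so that the surviving pure monomials contribute exactly $\tfrac{1-q^{h(1)}}{1-q}\prod_{j=1}^{n-1}\tfrac{1-q^j}{1-q}$, the first summand of \eqref{eq:Ph1}; and relation $(3)$, which identifies $Y_k\prod_{\ell=h(1)+1}^n(-X_\ell)$ with the pure monomial $\prod_{\ell=2}^n(-X_\ell)$, bounds the $X$-degree carried on each $Y_k$ and, via $(4)$, leaves only $n-1$ independent $Y$-directions, so the $Y$-part contributes exactly the second summand $(n-1)q^{h(1)-1}\tfrac{1-q^{n-h(1)}}{1-q}\prod_{j=1}^{n-2}\tfrac{1-q^j}{1-q}$. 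Matching these two counts against \eqref{eq:Ph1} gives the required bound.

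I expect the main obstacle to be precisely this last combinatorial bookkeeping: verifying that the reductions terminate in a spanning set of the asserted size, i.e. that relations $(3)$ and $(4)$ interact so that the $X$-monomials attached to each $Y_k$ can be restricted to the range yielding the factor $\tfrac{1-q^{n-h(1)}}{1-q}\prod_{j=1}^{n-2}\tfrac{1-q^j}{1-q}$, and that no pure monomial with $X_1$-exponent at least $h(1)$ survives. Crucially, I only need an \emph{upper} bound on the spanning set: injectivity is then handed to me for free by surjectivity onto the torsion-free target of known rank, so I would not prove linear independence directly, letting the Poincar\'e polynomial of Lemma~\ref{lemma:PoincarePolynomailXh} do that work.
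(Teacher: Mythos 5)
Your proposal is correct and follows essentially the same route as the paper: the same surjection $\Z[X,Y]/I\onto H^*(\Xh)$ built from $\x_k,\y_k$ via Proposition~\ref{proposition:generator} and Lemma~\ref{lemma:yk}, followed by the same rank-count against Lemma~\ref{lemma:PoincarePolynomailXh} using a spanning set in normal form $u_0(X)+\sum_{k=1}^{n-1}u_k(X)Y_k$, with the pure part cut down by $X_1\prod_{\ell=2}^{h(1)}(X_1-X_\ell)=0$ (from relations $(2)$ and $(4)$) and the $Y$-part cut down by relations $(2)$, $(3)$, $(5)$. The combinatorial bookkeeping you defer is exactly the content of Cases 1 and 2 in the paper's proof, and your identification of the two Hilbert-series summands matches it.
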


\begin{proof}
Let $\cx_k$ (resp. $\cy_{k}$) be the image of $\x_k$ (resp. $\y_{k}$) under the homomorphism $H^{*}_{\Tn}(\Xh) \to H^{*}(\Xh)$.
We define a ring homomorphism 
$$
\Z[X_1,\ldots,X_n,Y_1,\ldots,Y_n] \to H^{*}(\Xh); \ \ \ X_k \mapsto \cx_k, Y_k \mapsto \cy_k.
$$
This homomorphism is surjective from Proposition~\ref{proposition:generator} and the surjectivity of the homomorphism $H^{*}_{\Tn}(\Xh) \to H^{*}(\Xh)$. 
From Lemma~\ref{lemma:yk} and \eqref{eq:CohomologyFlag2}, the five types of elements above are all sent to $0$ under this homomorphism, and hence it induces a surjective ring homomorphism
\begin{equation} \label{eq:varphi}
\varphi: M^*(h):=\Z[X_1,\ldots,X_n,Y_1,\ldots,Y_n]/I \onto H^{*}(\Xh).
\end{equation}
Recall that the Poincar\'{e} polynomial of $\Xh$ is given by \eqref{eq:Ph1} in Lemma~ \ref{lemma:PoincarePolynomailXh}. 
Let $a_d$ be the coefficient of $\q^d$ in \eqref{eq:Ph1} for each $d\geq0$.
Then, since $H^*(\Xh)$ is a free $\Z$-module, it is enough to show that $M^{2d}(h)$ is generated by $a_d$ elements as a $\Z$-module for each $d$. 

Since the relation \eqref{eq:yksquare} was derived algebraically from the four types of relations in Lemma \ref{lemma:yk}, the same computation works as well to see that
$$
Y_k^2=Y_k \displaystyle\prod_{\ell=2}^{h(1)}(-X_{\ell}) \ \ \ {\rm in} \ M^*(h). 
$$
This together with $(1)$ and $(4)$ in $I$ implies that any element of $M^*(h)$ can be written as
\begin{equation} \label{eq:Mh}
u_0(X)+\displaystyle\sum_{k=1}^{n-1}u_k(X)Y_k
\end{equation}
where $u_0(X)$ and $u_k(X)$ are polynomials in the variables $X_1,\ldots,X_n$. 

{\bf Case 1.} We first determine the form of $u_0(X)$ in \eqref{eq:Mh}. 
Note first that $u_0(X)$ can be written as a linear combination of the form 
\begin{equation} \label{eq:FlagBasis}
X_1^{i_1}X_2^{i_2}\cdots X_n^{i_n} \ \ \ (0\leq i_j \leq n-j)
\end{equation}
over $\Z$. In fact, for each $1\leq j\leq n$, we have
\begin{equation} \label{eq:FlagBasis2}
X_j^{n-j+1}=-\displaystyle\sum_{i_1+i_2+\cdots+i_j=n-j+1 \atop i_j \leq n-j} X_1^{i_1}X_2^{i_2}\cdots X_j^{i_j} \ \ \ \ \ \ {\rm in} \ M^*(h) 
\end{equation} 
since it is known that the same relations hold in $\Z[X_1,\ldots,X_n]/(e_i(X) \mid 1\leq i \leq n)$ (cf. \cite[p163 Proof of Proposition~3]{fult97}) and we have $e_i(X_1,\ldots,X_n)$ in $I$ for all $i$. So we can use this to see that $u_0(X)$ can be written as above.
On the other hand, from $(2)$ and $(4)$ in $I$ we have 
\begin{equation} \label{eq:RelationX}
X_1\displaystyle\prod_{\ell=2}^{h(1)}(X_1-X_{\ell})=0 \ \ \ {\rm in} \ M^*(h).
\end{equation} 
This implies that $\prod_{\ell=1}^{h(1)} X_{\ell}$ can be expressed by a $\Z$-linear combination of monomials which are less than $\prod_{\ell=1}^{h(1)} X_{\ell}$ with respect to the reverse lexicographic order on the set of monomials, i.e. a monomial $X_1^{i_1}\cdots X_{n-1}^{i_{n-1}}X_n^{i_n}$ is less than or equal to a monomial $X_1^{j_1}\cdots X_{n-1}^{j_{n-1}}X_n^{j_n}$ if and only if $(i_n,i_{n-1},\ldots,i_1) \leq (j_n,j_{n-1},\ldots,j_1)$ where $\leq$ is the lexicographic order.
From this and \eqref{eq:FlagBasis2} again, we see that $u_0(X)$ can be written as a linear combination of the form \eqref{eq:FlagBasis} which does not contain the factor $\prod_{\ell=1}^{h(1)} X_{\ell}$. 

The Hilbert series of a graded $\Z$-submodule of the polynomial ring $\Z[X_1,\ldots,X_n]$ generated by the monomials of the form \eqref{eq:FlagBasis} which contains the factor $\prod_{\ell=1}^{h(1)} X_{\ell}$ is equal to
$$
\Big(\displaystyle\prod_{j=1}^{n-h(1)} \frac{1-\q^j}{1-\q}\Big)\q^{h(1)}\displaystyle\prod_{j=n-h(1)+1}^{n} \frac{1-\q^{j-1}}{1-\q}=\frac{\q^{h(1)}-\q^n}{1-\q}\displaystyle\prod_{j=1}^{n-1} \frac{1-\q^j}{1-\q}
$$
where $\deg(\q)=2$. 
Therefore, the Hilbert series of a graded $\Z$-submodule of the polynomial ring $\Z[X_1,\ldots,X_n]$ generated by the monomials of the form \eqref{eq:FlagBasis} which does not contain the factor $\prod_{\ell=1}^{h(1)} X_{\ell}$ is equal to
$$
\displaystyle\prod_{j=1}^{n} \frac{1-\q^j}{1-\q}-\frac{\q^{h(1)}-\q^n}{1-\q}\displaystyle\prod_{j=1}^{n-1} \frac{1-\q^j}{1-\q}=\frac{1-\q^{h(1)}}{1-\q}\displaystyle\prod_{j=1}^{n-1} \frac{1-\q^j}{1-\q}
$$
which coincides with the first summand of the arithmetic formula \eqref{eq:Ph1} for the Poincar\'{e} polynomial of $\Xh$.

\smallskip

{\bf Case 2.} We next determine the form of $\sum_{k=1}^{n-1}u_k(X)Y_k$ in \eqref{eq:Mh}. 
From $(2)$ in $I$, we may assume that each $u_k(X)$ is a polynomial in the variables $X_2,\ldots,X_n$. Moreover, from $(5)$ and $(3)$ in $I$, we see that there are no summand of the form $e_i(X_2,\ldots,X_n)$ $(1 \leq i \leq n-1)$ and $\prod_{\ell=h(1)+1}^{n} X_{\ell}$ in $u_k(X)$ for any $k$. This situation is the same as in Case 1 where the number of variables has decreased to $n-1$ and subscripts are reversed.
Recalling that $\deg(Y_k)=2(h(1)-1)$, the Hilbert series of a graded $\Z$-submodule of the polynomial ring $\Z[X_1,\ldots,X_n,Y_1,\ldots,Y_n]$ generated by the monomials of the form $X_n^{\ell_1}X_{n-1}^{\ell_{2}}\cdots X_2^{\ell_{n-1}}Y_k$ $(0\leq \ell_j \leq (n-1)-j, \ 1 \leq k \leq n-1)$ which does not contain the factor $\prod_{\ell=h(1)+1}^{n} X_{\ell}$ is equal to
$$
(n-1)\q^{h(1)-1}\frac{1-\q^{n-h(1)}}{1-\q}\displaystyle\prod_{j=1}^{n-2} \frac{1-\q^j}{1-\q}
$$
which coincides with the second summand in \eqref{eq:Ph1}.

Combining the conclusions of Case 1 and Case 2, we see that $M^{2d}(h)$ is generated by $a_d$ elements as a $\Z$-module for each $d$, as desired. Therefore, the homomorphism \eqref{eq:varphi} is an isomorphism.
\end{proof}

\begin{remark}
Taking $h(1)=n$ in Theorem~$\ref{theorem:maincohomology}$, $\Xh$ is the ambient flag variety $\Flags(\C^n)$ itself. Then $(3)$ in $I$ means that we have the following elements in $I$:
$$
Y_k-\prod_{\ell=2}^{n}(-X_{\ell}) \ \ \ (1 \leq k \leq n).
$$
Thus, our presentation of the cohomology ring $H^*(\Xh)$ in Theorem $\ref{theorem:maincohomology}$ reduces to the presentation \eqref{eq:CohomologyFlag2}.
\end{remark}

\begin{remark}
Recall that $\cx_k$ (resp. $\cy_{k}$) are the image of $\x_k$ (resp. $\y_{k}$) under the homomorphism $H^{*}_{\Tn}(\Xh) \to H^{*}(\Xh)$.
From the proof of Theorem~$\ref{theorem:maincohomology}$, the following two types of the monomials 
\begin{align}
\cx_1^{i_1}\cx_2^{i_2}\cdots \cx_n^{i_n} \ \ \ & {\rm which \ does \ not \ contain \ the \ factor} \ \prod_{\ell=1}^{h(1)} \cx_{\ell} \label{eq:basis1} \\
\cx_n^{\ell_1}\cx_{n-1}^{\ell_2}\cdots \cx_2^{\ell_{n-1}}\cy_k \ \ \ & {\rm which \ does \ not \ contain \ the \ factor} \ \prod_{\ell=h(1)+1}^{n} \cx_{\ell} \label{eq:basis2} 
\end{align}
running over all 
\begin{align*}
0 \leq i_j \leq n-j \quad\qquad\qquad\ \ \ &{\rm in} \ \eqref{eq:basis1} \\
0 \leq \ell_j \leq n-1-j \ \text{ and } \ 1 \leq k \leq n-1 \ \ \ &{\rm in} \ \eqref{eq:basis2}
\end{align*}
form a $\Z$-basis of $H^*(\Xh)$ when $h=(h(1),n,\ldots,n)$.
\end{remark}

\subsection{The $\Sn$-action on $H^*(\Xh)$}\label{subsec:Sn-rep}
Let $M_1$ and $M_2$ be submodules generated by the monomials of the form \eqref{eq:basis1} and \eqref{eq:basis2}, respectively.
Let us denote by $P(M_i,\q)$ the Hilbert series of $M_i$ for $i=1,2$. Then we have
\begin{equation}
\begin{split}\label{eq:P12}
P(M_1,\q)&=\frac{1-\q^{h(1)}}{1-\q}\displaystyle\prod_{j=1}^{n-1} \frac{1-\q^j}{1-\q}, \\
P(M_2,\q)&=(n-1)\q^{h(1)-1}\frac{1-\q^{n-h(1)}}{1-\q}\displaystyle\prod_{j=1}^{n-2} \frac{1-\q^j}{1-\q} 
\end{split}
\end{equation}
from the proof of Theorem~\ref{theorem:maincohomology}.
On the other hand, since we have $v \cdot \x_k=\x_k$ and $\y_k=\y_{v(k)}$ for $v\in \Sn$ by \eqref{eq:defx}, \eqref{eq:defy}, and \eqref{eq:Tymoczko}, it follows that
\begin{align} \label{eq:Repxy}
v \cdot \cx_k=\cx_k, \ \ v \cdot \cy_k=\cy_{v(k)} \ \ {\rm for \ any} \ v\in \Sn
\end{align}
as well.
Namely, $\cx_k$ is invariant and $y_k$ are covariant under the $\Sn$-action.
In particular, the $\Sn$-action on $H^*(\Xh)$ preserves the submodules $M_1$ and $M_2$.

From now on, we work with $H^*(\Xh;\C)$ in $\C$-coefficients, and we describe $M_1$ and $M_2$ as $\Sn$-representations\footnote[3]{Strictly speaking, we mean $M_1\otimes\C$ and $M_2\otimes\C$.}.
For a graded $\C$-vector space $V=\bigoplus_{i=0}^{m} V_{2i}$ such that each $V_{2i}$ is an $\Sn$-representation, we put 
$$
R(V,\q):=\sum_{i=0}^{\dimX} V_{2i} \q^i \in R(\Sn)[\q]
$$
where $R(\Sn)$ is the representation ring of $\Sn$. 
By abuse of notation, we denote
$$
R(\Xh,\q):=\sum_{i=0}^{\dimX} H^{2i}(\Xh;\C) \q^i \in R(\Sn)[\q], \ \ \ \dimX=\dim_{\C}\Xh.
$$
By an argument similar to that used to deduce \eqref{eq:P12}, we have
\begin{equation*}
\begin{split}
R(M_1,\q)&=S^{(n)}\frac{1-\q^{h(1)}}{1-\q}\displaystyle\prod_{j=1}^{n-1} \frac{1-\q^j}{1-\q},  \\
R(M_2,\q)&=S^{(n-1,1)}\q^{h(1)-1}\frac{1-\q^{n-h(1)}}{1-\q}\displaystyle\prod_{j=1}^{n-2} \frac{1-\q^j}{1-\q}
\end{split}
\end{equation*}
from 
\eqref{eq:Repxy}.
Here, for a partition $\lambda$ of $n$, $S^{\lambda}$ is the irreducible representation of $\Sn$ corresponding to $\lambda$.
Therefore, we have
\begin{align*} 
R(\Xh,\q)=S^{(n)}\frac{1-\q^{h(1)}}{1-\q}\displaystyle\prod_{j=1}^{n-1} \frac{1-\q^j}{1-\q}+ S^{(n-1,1)}\q^{h(1)-1}\frac{1-\q^{n-h(1)}}{1-\q}\displaystyle\prod_{j=1}^{n-2} \frac{1-\q^j}{1-\q}. 
\end{align*}
Equivalently, since $S^{(n)}=\Ind_{\Sn}^{\Sn} 1$ and $S^{(n-1,1)}=\Ind_{\S_{n-1}\times \S_1}^{\Sn} 1-\Ind_{\Sn}^{\Sn} 1$ in the representation ring $R(\Sn)$ where each $1$ denotes the trivial representation, we obtain the following corollary.

\begin{corollary}\label{corollary:main1}
If $h=(h(1),n,\ldots,n)$, then we have
\begin{align*} 
R(\Xh,\q)=&(\Ind_{\Sn}^{\Sn} 1)\frac{1-\q^{h(1)-1}}{1-\q}\frac{1-\q^{n}}{1-\q}\displaystyle\prod_{j=1}^{n-2} \frac{1-\q^j}{1-\q} \\
&+(\Ind_{\S_{n-1}\times \S_1}^{\Sn} 1)\q^{h(1)-1}\frac{1-\q^{n-h(1)}}{1-\q}\displaystyle\prod_{j=1}^{n-2}\frac{1-\q^j}{1-\q}. 
\end{align*}
In particular, Conjecture \ref{conjecture:h-posi} is true for $h=(h(1),n,\ldots,n)$.
\end{corollary}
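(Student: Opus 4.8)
The plan is to start from the decomposition $R(\Xh,\q)=R(M_1,\q)+R(M_2,\q)$ already obtained above, and merely rewrite the two irreducible characters $S^{(n)}$ and $S^{(n-1,1)}$ in terms of induced trivial representations. Since $S^{(n)}=\Ind_{\Sn}^{\Sn}1$ and $S^{(n-1,1)}=\Ind_{\S_{n-1}\times\S_1}^{\Sn}1-\Ind_{\Sn}^{\Sn}1$ hold in $R(\Sn)$, substituting these into the formulas for $R(M_1,\q)$ and $R(M_2,\q)$ and collecting the coefficients of $\Ind_{\Sn}^{\Sn}1$ and $\Ind_{\S_{n-1}\times\S_1}^{\Sn}1$ will give the stated expression directly, provided one verifies a single polynomial identity.

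First I would record the substitution. The coefficient of $\Ind_{\S_{n-1}\times\S_1}^{\Sn}1$ comes only from the $S^{(n-1,1)}$-term of $R(M_2,\q)$ and is exactly $\q^{h(1)-1}\frac{1-\q^{n-h(1)}}{1-\q}\prod_{j=1}^{n-2}\frac{1-\q^j}{1-\q}$, matching the second summand. The coefficient of $\Ind_{\Sn}^{\Sn}1$ is the $S^{(n)}$-coefficient of $R(M_1,\q)$ minus the $S^{(n-1,1)}$-coefficient of $R(M_2,\q)$ (the latter entering with a minus sign through $S^{(n-1,1)}=\Ind_{\S_{n-1}\times\S_1}^{\Sn}1-\Ind_{\Sn}^{\Sn}1$), namely
$$
\frac{1-\q^{h(1)}}{1-\q}\prod_{j=1}^{n-1}\frac{1-\q^j}{1-\q}-\q^{h(1)-1}\frac{1-\q^{n-h(1)}}{1-\q}\prod_{j=1}^{n-2}\frac{1-\q^j}{1-\q}.
$$
I would then verify that this equals $\frac{1-\q^{h(1)-1}}{1-\q}\frac{1-\q^{n}}{1-\q}\prod_{j=1}^{n-2}\frac{1-\q^j}{1-\q}$. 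After factoring out the common product $\prod_{j=1}^{n-2}\frac{1-\q^j}{1-\q}$ and clearing the remaining denominators, this reduces to the elementary identity
$$
(1-\q^{h(1)})(1-\q^{n-1})-\q^{h(1)-1}(1-\q^{n-h(1)})(1-\q)=(1-\q^{h(1)-1})(1-\q^{n}),
$$
which one checks by expanding both sides.

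Finally, for the assertion that Conjecture \ref{conjecture:h-posi} holds in this case, I would observe that in the resulting formula both coefficient polynomials have nonnegative integer coefficients in $\q$. Indeed each factor $\frac{1-\q^m}{1-\q}=1+\q+\cdots+\q^{m-1}$ has nonnegative coefficients, and this property is preserved under products and under multiplication by the monomial $\q^{h(1)-1}$. Hence every graded piece $H^{2k}(\Xh;\C)$ is a nonnegative integer combination of the two permutation representations $\Ind_{\Sn}^{\Sn}1=\Ind_{\S_{(n)}}^{\Sn}1$ and $\Ind_{\S_{n-1}\times\S_1}^{\Sn}1=\Ind_{\S_{(n-1,1)}}^{\Sn}1$, and is therefore itself a permutation representation. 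The only point requiring a moment of care is that the coefficient of $\Ind_{\Sn}^{\Sn}1$ is produced as a \emph{difference}, so its nonnegativity is not visible before the simplification above; this is precisely why rewriting it in the product form $\frac{1-\q^{h(1)-1}}{1-\q}\frac{1-\q^{n}}{1-\q}\prod_{j=1}^{n-2}\frac{1-\q^j}{1-\q}$ is the crux of the argument, while the rest is bookkeeping.
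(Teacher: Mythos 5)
Your proposal is correct and follows essentially the same route as the paper: substitute $S^{(n)}=\Ind_{\Sn}^{\Sn}1$ and $S^{(n-1,1)}=\Ind_{\S_{n-1}\times\S_1}^{\Sn}1-\Ind_{\Sn}^{\Sn}1$ into $R(M_1,\q)+R(M_2,\q)$ and collect terms; the polynomial identity you isolate, $(1-\q^{h(1)})(1-\q^{n-1})-\q^{h(1)-1}(1-\q^{n-h(1)})(1-\q)=(1-\q^{h(1)-1})(1-\q^{n})$, is exactly the computation the paper leaves implicit, and it checks out. Your closing remark on the nonnegativity of the two coefficient polynomials correctly supplies the ``in particular'' assertion about Conjecture \ref{conjecture:h-posi}.
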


We remark that Corollary \ref{corollary:main1} itself is already known from \cite[Proposition~8.1]{sh-wa14} since the Shareshian-Wachs conjecture (\cite[Conjecture 1.2]{sh-wa11}) has been solved (\cite{br-ch}, \cite{gu}). In fact, a similar formula is known for $h=(h(1),h(2),n,\ldots,n)$.
However, our result shows that it is naturally seen from our description of the ring structure of $H^*(\Xh)$.

\subsection{The invariant subring $H^*(\Xh)^{\Sn}$}
\label{subsec:inv-subring}
From our presentation of the cohomology ring $H^*(\Xh)$ and the description of the $\Sn$-action on $H^*(\Xh)$ given in Section \ref{subsec:Sn-rep}, we now give a ring presentation of the invariant subring $H^*(\Xh)^{\Sn}$.

\begin{corollary}\label{corollary:main2}
If $h=(h(1),n,\ldots,n)$, then the $\Sn$-invariant subring $H^*(\Xh)^{\Sn}$ is given by
\begin{align*}
H^*(\Xh)^{\Sn} \cong \Z[X_1,\ldots,X_n]/(e_i(X_1,\ldots,X_n), X_1\prod_{\ell=2}^{h(1)} (X_1-X_{\ell}) \mid 1 \leq i \leq n)
\end{align*}
where $e_i(X_1,\ldots,X_n)$ is the $i$-th elementary symmetric polynomial. 
\end{corollary}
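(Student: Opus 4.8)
\emph{The plan.} The strategy is to identify $H^*(\Xh)^{\Sn}$ with the subring $M_1$ generated by the classes $\cx_1,\ldots,\cx_n$, and then to recognize $M_1$ as the claimed quotient of $\Z[X_1,\ldots,X_n]$. First I would invoke the description \eqref{eq:Repxy} of Tymoczko's action: each $\cx_k$ is invariant while the $\cy_k$ are permuted by $v\cdot\cy_k=\cy_{v(k)}$. Hence the submodule $M_1$ spanned by the monomials \eqref{eq:basis1} consists of invariant classes, and in fact $M_1$ is exactly the image of the homomorphism $\Z[X_1,\ldots,X_n]\to H^*(\Xh)$, $X_k\mapsto\cx_k$, since by Case~1 of the proof of Theorem~\ref{theorem:maincohomology} every polynomial in the $\cx_k$ reduces to a $\Z$-combination of the monomials \eqref{eq:basis1}.

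It then remains to see that the invariants contribute nothing beyond $M_1$. Because $M_1$ and $M_2$ are $\Sn$-stable with $H^*(\Xh)=M_1\oplus M_2$, we have $H^*(\Xh)^{\Sn}=M_1\oplus M_2^{\Sn}$, so I must show $M_2^{\Sn}=0$. Over $\C$ this is immediate from the computation of $R(M_2,\q)$ in Section~\ref{subsec:Sn-rep}, in which every graded piece of $M_2\otimes\C$ is a multiple of the nontrivial irreducible $S^{(n-1,1)}$ and hence contains no trivial summand. As $M_2$ is torsion-free by Theorem~\ref{theorem:Xh}~(2), the vanishing of $(M_2\otimes\C)^{\Sn}$ forces $M_2^{\Sn}=0$ already over $\Z$. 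Thus $H^*(\Xh)^{\Sn}=M_1$.

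Next I would produce the presentation of $M_1$. The map $X_k\mapsto\cx_k$ sends each elementary symmetric polynomial $e_i(X)$ to $0$ by \eqref{eq:CohomologyFlag2} (equivalently, relation $(5)$ of $I$), and sends $X_1\prod_{\ell=2}^{h(1)}(X_1-X_{\ell})$ to $0$ by \eqref{eq:RelationX}. It therefore factors through a surjection
$$
\bar\varphi\colon R:=\Z[X_1,\ldots,X_n]\big/\big(e_i(X),\ X_1\textstyle\prod_{\ell=2}^{h(1)}(X_1-X_{\ell})\ \big|\ 1\le i\le n\big)\onto M_1.
$$
Finally I would prove $\bar\varphi$ is an isomorphism by a degreewise rank count. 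Since the relations $e_i(X)=0$ and $X_1\prod_{\ell=2}^{h(1)}(X_1-X_\ell)=0$ hold in $R$ by construction, the reduction carried out in Case~1 of the proof of Theorem~\ref{theorem:maincohomology}---using \eqref{eq:FlagBasis2} together with the reverse-lexicographic consequence of the second relation---applies verbatim in $R$ and shows that $R$ is spanned as a $\Z$-module by the images of the monomials \eqref{eq:FlagBasis} that do not contain the factor $\prod_{\ell=1}^{h(1)}X_{\ell}$. The number of such monomials in each degree is exactly the rank of the corresponding graded piece of $M_1$, namely the relevant coefficient of $P(M_1,\q)=\frac{1-\q^{h(1)}}{1-\q}\prod_{j=1}^{n-1}\frac{1-\q^j}{1-\q}$ from \eqref{eq:P12}. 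Hence in each degree $2d$ the map $\bar\varphi$ is a surjection from a $\Z$-module generated by at most $r_d$ elements onto the free $\Z$-module $M_1^{2d}$ of rank $r_d$, and such a map is necessarily an isomorphism.

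I expect this last rank-matching step to be the only real technical point, but it is essentially free: the needed spanning bound is identical to the one already established in Case~1 of Theorem~\ref{theorem:maincohomology}, so the genuinely new input is just the vanishing $M_2^{\Sn}=0$ recorded above.
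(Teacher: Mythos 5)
Your overall architecture is sound, and the presentation half of your argument --- the surjection from $\Z[X_1,\ldots,X_n]/(e_i(X),\ X_1\prod_{\ell=2}^{h(1)}(X_1-X_{\ell}))$ onto the invariant subring followed by a degreewise rank count against $P(M_1,\q)$ from \eqref{eq:P12} --- is exactly the paper's proof. Where you genuinely diverge is in establishing the containment $H^{*}(\Xh)^{\Sn}\subseteq M_1$. The paper does this directly over $\Z$: it writes an invariant class in the basis \eqref{eq:basis1}, \eqref{eq:basis2}, uses \eqref{eq:Repxy} to see that only the symmetric combination $\cy_1+\cdots+\cy_n$ of the $\cy$-classes can occur, and then substitutes Lemma~\ref{lemma:yk}(4) to land in the polynomial subring in the $\cx_k$. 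You instead import the character computation of Section~\ref{subsec:Sn-rep} over $\C$ together with torsion-freeness. That route is legitimate and a bit slicker, but it is not integral and it leans on a statement that needs repair.

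The point to fix is your claim that $H^{*}(\Xh)=M_1\oplus M_2$ is a decomposition of $\Sn$-modules, so that $H^{*}(\Xh)^{\Sn}=M_1\oplus M_2^{\Sn}$. In fact $M_2$ is \emph{not} an $\Sn$-submodule (the paper's own assertion to this effect in Section~\ref{subsec:Sn-rep} is loose on exactly this point): for $v$ with $v(k)=n$ one has $v\cdot\cy_k=\cy_n$, and by Lemma~\ref{lemma:yk}(4),
$$
\cy_n=\prod_{\ell=2}^{h(1)}(\cx_1-\cx_{\ell})-\sum_{j=1}^{n-1}\cy_j ,
$$
whose $M_1$-component is nonzero in general (for $n=3$, $h=(2,3,3)$ it is $\cx_1-\cx_2\neq 0$). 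Consequently $M_2^{\Sn}=0$ by itself does not yield $H^{*}(\Xh)^{\Sn}\subseteq M_1$, since the $M_2$-component of an invariant class need not be invariant. The repair is immediate: $M_1$ \emph{is} a submodule (it is pointwise fixed), so pass to the quotient $Q:=H^{*}(\Xh)/M_1$, which is a free $\Z$-module on the images of the monomials \eqref{eq:basis2} and on which $v\cdot\overline{\cy_k}=\overline{\cy_{v(k)}}$ with $\sum_{k=1}^{n}\overline{\cy_k}=0$. Hence $Q\otimes\C$ is a sum of quotients of $\C^{n}/\C(1,\ldots,1)\cong S^{(n-1,1)}$, so $(Q\otimes\C)^{\Sn}=0$; since $Q$ is free, any $\Sn$-invariant class maps to $0$ in $Q$ and therefore lies in $M_1$. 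With $M_2$ replaced by $Q$ throughout that step, your proof is complete and correct.
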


\begin{proof}
Observe that any element $\alpha\in H^*(\Xh)^{\Sn} (\subset H^*(\Xh))$ can be expressed as 
$$
\sum a_{i_1,\ldots,i_n}\cx_1^{i_1}\cx_2^{i_2}\cdots \cx_n^{i_n}+ \sum b_{{\ell}_1,\ldots,{\ell}_{n-1}}\cx_n^{\ell_1}\cx_{n-1}^{\ell_2}\cdots \cx_2^{\ell_{n-1}}(\cy_1+\cdots+\cy_n) 
$$
for some integers $a_{i_1,\ldots,i_n}$ and $b_{{\ell}_1,\ldots,{\ell}_{n-1}}$
from \eqref{eq:basis1}, \eqref{eq:basis2}, and \eqref{eq:Repxy}. Since we have
\begin{equation*} 
\cy_1+\cdots+\cy_n=\prod_{\ell=2}^{h(1)} (\cx_1-\cx_{\ell}),
\end{equation*}
from Lemma~\ref{lemma:yk}~(4), $\alpha$ can be written by a polynomial in the variables $\cx_1,\ldots,\cx_n$. 
This means that the ring homomorphism 
$$
\Z[X_1,\ldots,X_n] \to H^{*}(\Xh)^{\Sn}; \ \ \ X_k \mapsto \cx_k
$$
is surjective. 
From \eqref{eq:CohomologyFlag2} we have
$$
e_i(\cx_1,\ldots,\cx_n)=0 \ \ \ {\rm in} \ H^*(\Flags(\C^n))
$$
for $i=1,\dots,n$.
Also, from \eqref{eq:RelationX}, we have
$$
\cx_1\prod_{\ell=2}^{h(1)} (\cx_1-\cx_{\ell})=0 \ \ \ {\rm in} \ H^*(\Xh)^{\Sn}.
$$
Hence, we obtain the surjective ring homomorphism 
$$
\psi: 
\Z[X_1,\ldots,X_n]/(e_i(X_1,\ldots,X_n), X_1\prod_{\ell=2}^{h(1)} (X_1-X_{\ell}) \mid 1 \leq i \leq n) \onto H^{*}(\Xh)^{\Sn}
$$
by sending $X_k$ to $\cx_k$ for any $k=1,\ldots,n$.
Let us denote by $N^*(h)$ the ring appearing in the domain of this map $\psi$.

We know that the invariant subring $H^*(\Xh)^{\Sn}$ is torsion free, since so is $H^*(\Xh)$.
Also, from \eqref{eq:P12}, the Hilbert series of $H^*(\Xh)^{\Sn}$ is 
\begin{equation*} 
P(H^*(\Xh)^{\Sn},\q)=\frac{1-\q^{h(1)}}{1-\q}\displaystyle\prod_{j=1}^{n-1} \frac{1-\q^j}{1-\q}.
\end{equation*}
On the other hand, an argument similar to that used in the proof of Theorem~\ref{theorem:maincohomology} shows that $N^*(h)$ is a free $\Z$-module whose Hilbert series 
coincides with this since we can construct a similar basis without $Y_k$. Thus, the map $\psi$ above is a surjective homomorphism between the free $\Z$-modules of the same rank, and hence it must be an isomorphism.
\end{proof}

\begin{remark}
In $\Q$-coefficients, the invariant subring
$H^*(\Xh;\Q)^{\Sn}$ is isomorphic to 
the cohomology ring of the corresponding regular nilpotent Hessenberg variety $($\cite[Theorem~B]{AHHM}$)$, and the latter is presented as 
the quotient ring of a polynomial ring $\Q[X_1,\ldots,X_n]$ by an ideal generated by the following polynomials $($\cite[Theorem~A]{AHHM}$):$
$$
\cf_{h(j),j}:=\sum_{k=1}^{j} \Big(X_k \prod_{\ell=j+1}^{h(j)} (X_k-X_{\ell}) \Big) \ \ \ (1 \leq j \leq n).
$$
We can see that our presentation of $H^*(\Xh;\Q)^{\Sn}$ is compatible with these results as follows.
Note first that $\cf_{h(1),1}=X_1 \prod_{\ell=2}^{h(1)} (X_1-X_{\ell})$ is a generator of the ideal in the right-hand side of our presentation in Corollary $\ref{corollary:main2}$.
From \cite[Remark~3.4]{AHHM}, we have the following equality of ideals of the polynomial ring $\Q[X_1,\dots,X_n]:$
$$
(\cf_{n,1},\cf_{n,2},\ldots,\cf_{n,n})=(e_1(X_1,\ldots,X_n),\ldots,e_n(X_1,\ldots,X_n)).
$$
We also know from \cite[Lemma~4.1]{AHHM} that 
$$
\cf_{n,1}\in (\cf_{h(1),1},\cf_{n,2},\ldots,\cf_{n,n})
$$
since $n\geq h(1)$. 
Thus, it follows that the ideal appearing in Corollary \ref{corollary:main2} can be written as
\begin{equation*}
(e_1(X_1,\ldots,X_n),\ldots,e_n(X_1,\ldots,X_n),\cf_{h(1),1}) = (\cf_{h(1),1},\cf_{n,2},\ldots,\cf_{n,n})
\end{equation*}
in $\Q[X_1,\dots,X_n]$ where the right-hand side is the ideal appearing in \cite[Theorem~A]{AHHM}.
\end{remark}


\end{document}